\documentclass[11pt]{article} 
\usepackage{amsmath,amssymb,amsthm}  
\usepackage{graphicx}  

\allowdisplaybreaks

\begin{document}

\def\fl#1{\left\lfloor#1\right\rfloor}
\def\cl#1{\left\lceil#1\right\rceil}
\def\ang#1{\left\langle#1\right\rangle}
\def\stf#1#2{\left[#1\atop#2\right]} 
\def\sts#1#2{\left\{#1\atop#2\right\}}
\def\eul#1#2{\left\langle#1\atop#2\right\rangle}
\def\N{\mathbb N}
\def\Z{\mathbb Z}
\def\R{\mathbb R}
\def\C{\mathbb C}

\newtheorem{theorem}{Theorem}
\newtheorem{Prop}{Proposition}
\newtheorem{Cor}{Corollary}
\newtheorem{Lem}{Lemma} 

\newenvironment{Rem}{\begin{trivlist} \item[\hskip \labelsep{\it
Remark.}]\setlength{\parindent}{0pt}}{\end{trivlist}}

\title{Sylvester power and weighted sums on the Frobenius set in arithmetic progression
}

\author{
Takao Komatsu 
\\
\small Department of Mathematical Sciences, School of Science\\[-0.8ex]
\small Zhejiang Sci-Tech University\\[-0.8ex]
\small Hangzhou 310018 China\\[-0.8ex]
\small \texttt{komatsu@zstu.edu.cn}%\, and \texttt{zjlgzy51@126.com}
}

\date{
%\small Submitted: February 15, 2021;  Accepted: March 25, 2021.\\
\small MR Subject Classifications: Primary 11D07; Secondary 05A15, 05A17, 05A19, 11B68, 11D04, 11P81 
}

\maketitle
 
\begin{abstract} 
Let $a_1,a_2,\dots,a_k$ be positive integers with $\gcd(a_1,a_2,\dots,a_k)=1$. Frobenius number is the largest positive integer that is NOT representable in terms of $a_1,a_2,\dots,a_k$. When $k\ge 3$, there is no explicit formula in general, but some formulae may exist for special sequences $a_1,a_2,\dots,a_k$, including, those forming arithmetic progressions and their modifications.    
In this paper, we give formulae for the power and weighted sum of nonrepresentable positive integers. As applications, we show explicit expressions of these sums for $a_1,a_2,\dots,a_k$ forming arithmetic progressions. 
\\
{\bf Keywords:} Frobenius problem, Frobenius numbers, Sylvester numbers, Sylvester sums, power sums, weighted sums, arithmetic sequences      
\end{abstract}

\section{Introduction}  

The {\it Coin Exchange Problem} (or Postage Stamp Problem / Chicken McNugget Problem)  has a long history and is one of the problems that has attracted many people as well as experts. 
Given positive integers $a_1,\dots,a_k$ with $\gcd(a_1,\dots,a_k)=1$, it is well-known that all sufficiently large $n$ can be represented as a nonnegative integer combination of $a_1,\dots,a_k$. Nowadays, it is most known as 
the {\it Frobenius Problem}, which is to determine the largest positive integer that is NOT representable as a nonnegative integer combination of given positive integers that are coprime (see \cite{ra05} for general references). This number is denoted by $g(a_1,\dots,a_k)$ and often called {\it Frobenius number}. 

Let $n(a_1,\dots,a_k)$ be the number of positive integers with no nonnegative integer representation by $a_1,\dots,a_k$. It is sometimes called {\it Sylvester number}.  

According to Sylvester, for positive integers $a$ and $b$ with $\gcd(a,b)=1$,  
\begin{align*}
g(a,b)&=(a-1)(b-1)-1\quad{\rm \cite{sy1884}}\,,\\
n(a,b)&=\frac{1}{2}(a-1)(b-1)\quad{\rm \cite{sy1882}}\,. 
\end{align*}

There are many kinds of problems related to the Frobenius problem. The problems for the number of solutions (e.g., \cite{tr00}), and the sum of integer powers of values the gaps in numerical semigroups (e.g., \cite{bs93,fr07,fks}) are popular. 
One of other famous problems is about the so-called {\it Sylvester sums} 
$$
s(a_1,\dots,a_k):=\sum_{n\in{\rm NR}(a_1,\dots,a_k)}n 
$$ 
(see, e.g., \cite[\S 5.5]{ra05}, \cite{tu06} and references therein), where ${\rm NR}(a_1,\dots,a_k)$ denotes the set of positive integers without nonnegative integer representation by $a_1,\dots,a_k$. In addition, denote the set of positive integers with nonnegative integer representation by $a_1,\dots,a_k$ by ${\rm R}(a_1,\dots,a_k)$. It is harder to obtain the Sylvester number than the Frobenius number, and even harder to obtain the Sylvester sum. Finally, long time after Sylvester, 
Brown and Shiue \cite{bs93} found the exact value for positive integers $a$ and $b$ with $\gcd(a,b)=1$,  
\begin{equation}
s(a,b)=\frac{1}{12}(a-1)(b-1)(2 a b-a-b-1)\,. 
\label{brown}
\end{equation} 
R\o dseth \cite{ro94} generalized Brown and Shiue's result by giving a closed form for $
s_\mu(a,b):=\sum_{n\in{\rm NR}(a,b)}n^\mu$,  
where $\mu$ is a positive integer.  

When $k=2$, there exist beautiful closed forms for Frobenius numbers, Sylvester numbers and Sylvester sums, but 
when $k\ge 3$, exact determination of these numbers is extremely difficult.  
The Frobenius number cannot be given by closed formulas of a certain type (Curtis (1990) \cite{cu90}), the problem to determine $F(a_1,\dots,a_k)$ is NP-hard under Turing reduction (see, e.g., Ram\'irez Alfons\'in \cite{ra05}). 
Nevertheless, one convenient formula is found by Johnson \cite{jo60}. One analytic approach to the Frobenius number can be seen in \cite{bgk01,ko03}.

Though closed forms for general case are hopeless for $k\ge 3$, several formulae for Frobenius numbers, Sylvester numbers and Sylvester sums have been considered under special cases.  For example, one of the best expositions for the Frobenius number in three variables can be seen in \cite{tr17}. For general $k\ge 3$, the Frobenius number and the Sylvester number for some special cases are calculated, including arithmetic sequences and geometric-like sequences (e.g., \cite{br42,op08,ro56,se77}). 

In fact, by introducing the other numbers, it is possible to determine the functions $g(A)$, $n(A)$ and $s(A)$ for the set of positive integers $A:=\{a_1,a_2,\dots,a_k\}$ with $\gcd(a_1,a_2,\dots,a_k)=1$. 

For each integer $i$ with $1\le i\le a_1-1$, there exists a least positive integer $m_i\equiv i\pmod{a_1}$ with $m_i\in{\rm R}(a_1,a_2,\dots,a_k)$. For convenience, we set $m_0=0$.  With the aid of such a congruence consideration modulo $a_1$, very useful results are established.  

\begin{Lem}  
We have 
\begin{align*}
g(a_1,a_2,\dots,a_k)&=\left(\max_{1\le i\le a_1-1}m_i\right)-a_1\,,\quad{\rm \cite{bs62}}\\ 
n(a_1,a_2,\dots,a_k)&=\frac{1}{a_1}\sum_{i=1}^{a_1-1}m_i-\frac{a_1-1}{2}\,,\quad{\rm \cite{se77}}\\ 
s(a_1,a_2,\dots,a_k)&=\frac{1}{2 a_1}\sum_{i=1}^{a_1-1}m_i^2-\frac{1}{2}\sum_{i=1}^{a_1-1}m_i+\frac{a_1^2-1}{12}\,.\quad{\rm \cite{tr08}}
\end{align*}
\label{lem1} 
\end{Lem}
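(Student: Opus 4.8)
The plan is to exploit the ``ray structure'' of the representable set modulo $a_1$. The crucial observation is that $n\in{\rm R}(a_1,\dots,a_k)$ implies $n+a_1\in{\rm R}(a_1,\dots,a_k)$, since one may add another copy of $a_1$ to any representation. Combined with the minimality defining each $m_i$, this forces, for every residue class $i$ with $1\le i\le a_1-1$, that the representable positive integers congruent to $i\pmod{a_1}$ are exactly $m_i,\,m_i+a_1,\,m_i+2a_1,\dots$, while the non-representable ones are precisely the finitely many values $i,\,i+a_1,\dots,\,m_i-a_1$ lying below $m_i$. For the class $i=0$, every positive multiple of $a_1$ is representable, so this class contributes no gaps, consistent with the convention $m_0=0$. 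Establishing this structural description is the main step; the three formulae then follow by bookkeeping.

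First I would read off the Frobenius number. Since class $0$ contributes no gaps and the largest gap in class $i$ is $m_i-a_1$, the largest non-representable integer overall is $\max_{1\le i\le a_1-1}(m_i-a_1)=\bigl(\max_{1\le i\le a_1-1}m_i\bigr)-a_1$, which is the first formula. Next I would count the gaps: class $i$ contains exactly $(m_i-i)/a_1$ non-representable positive integers, so summing over $i$ and using $\sum_{i=1}^{a_1-1}i=\tfrac12 a_1(a_1-1)$ gives
\[
n(a_1,\dots,a_k)=\sum_{i=1}^{a_1-1}\frac{m_i-i}{a_1}=\frac{1}{a_1}\sum_{i=1}^{a_1-1}m_i-\frac{a_1-1}{2},
\]
the second formula.

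Finally, for the Sylvester sum I would sum each arithmetic progression of gaps. Writing $N_i=(m_i-i)/a_1$ so that $m_i=i+N_i a_1$, the sum of the gaps in class $i$ is $\sum_{j=0}^{N_i-1}(i+j a_1)$, which, using $m_i^2-i^2=(m_i-i)(m_i+i)$, I would rewrite in the compact form $\tfrac{1}{2a_1}(m_i^2-i^2)-\tfrac12(m_i-i)$. Summing over $i$ and evaluating the elementary sums $\sum_{i=1}^{a_1-1}i$ and $\sum_{i=1}^{a_1-1}i^2$ collapses all $i$-dependent terms into the single constant $(a_1^2-1)/12$, yielding the third formula.

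The only genuine obstacle is the initial structural claim that the gaps in each residue class form the explicit truncated progression below $m_i$; once that is in hand, the Frobenius number and Sylvester count are immediate, and the Sylvester sum requires only the telescoping algebra indicated above.
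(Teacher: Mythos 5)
Your proof is correct. The paper does not actually prove Lemma \ref{lem1} itself---it cites \cite{bs62}, \cite{se77}, \cite{tr08}---but your structural claim (the gaps in residue class $i$ are exactly $i,\,i+a_1,\dots,\,m_i-a_1$, i.e.\ the numbers $m_i-ja_1$ with $1\le j\le (m_i-i)/a_1$) is precisely the decomposition the paper takes as its starting point in the proof of Theorem \ref{lem2}, so your argument is essentially the same approach, specialized to the cases $\mu=0,1$ together with the Brauer--Shockley reading of the largest gap.
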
 
Note that the third formula appeared with a typo in \cite{tr08}, and it has been corrected in \cite{pu18,tr17b}. 
\bigskip

In this paper, we give formulae for the power and weighted sum of nonrepresentable positive integers. As applications, we show explicit expressions of these sums for $a_1,a_2,\dots,a_k$ forming arithmetic progressions. Theorem \ref{lem2} is a general formula for power sum of nonrepresentable positive integers in general sequences $a_1,a_2,\dots,a_k$, that is 
$$
s_\mu(a_1,\dots,a_k):=\sum_{n\in{\rm NR}(a_1,\dots,a_k)}n^\mu  
$$ 
for a positive integer $\mu$.  
This may be called {\it Sylvester power sum}. 
This includes the third formula in Lemma \ref{lem1} as a special case. Theorem \ref{lem2} is useful to obtain an expression of each concrete sequence. Nevertheless, any elegant formula may be yielded from the sequence with a good pattern.  Indeed, Theorem \ref{th:pw-arith} is an explicit formula for power sum of nonrepresentable positive integers when the sequence $a_1,a_2,\dots,a_k$ forms an arithmetic progression. 

Next, we study the so-called {\it Sylvester weighted power sum}, that is 
$$
s_\mu^{(\lambda)}(a_1,\dots,a_k):=\sum_{n\in{\rm NR}(a_1,\dots,a_k)}\lambda^n n^\mu  
$$ 
for a positive integer $\mu$ with weight $\lambda(\ne 0,1)$. Weighted sums include the so-called alternate sums in special cases.  
Lemma \ref{lem-hh} is a general formula for weighted power sum of nonrepresentable positive integers in general sequences $a_1,a_2,\dots,a_k$ with $\lambda^{a_1}\ne 1$. Namely, $\lambda$ is not the $a_1$-th root of unity. As application, in Theorem \ref{th:wpw-arith}, we can get an explicit expression in the case of arithmetic progressions. 
 
When $\lambda^{a_1}=1$, Theorem \ref{lem-hh-a1} provides the counterpart formulae of Lemma \ref{lem-hh}. Theorem \ref{th:wpw-arith-d1} and Theorem \ref{th:wpw-arith-a1} show explicit formulae in exceptional cases, which are not included in Theorem \ref{th:wpw-arith}.   
Several examples illustrate and support our formulae.

\section{Power sums}  

We need the formula for the power sum of nonrepresentable numbers. Here, Bernoulli numbers $B_n$ are defined by the generating function 
$$
\frac{x}{e^x-1}=\sum_{n=0}^\infty B_n\frac{x^n}{n!}\,. 
$$  
The first few values of Bernoulli numbers are given by 
$$
B_0=1,~B_1=-\frac{1}{2},~B_2=\frac{1}{6},~B_4=-\frac{1}{30},~B_6=\frac{1}{42},~B_8=-\frac{1}{30},~B_{10}=\frac{5}{66} 
$$ 
with $B_n=0$ for odd $n\ge 3$. 

\begin{theorem}  
For $\mu\ge 0$, we have 
\begin{align*} 
&s_\mu(a_1,a_2,\dots,a_k):=\sum_{n\in{\rm NR}(a_1,a_2,\dots,a_k)}n^\mu\\ 
&=\frac{1}{\mu+1}\sum_{\kappa=0}^{\mu}\binom{\mu+1}{\kappa}B_{\kappa}a_1^{\kappa-1}\sum_{i=1}^{a_1-1}m_i^{\mu+1-\kappa} 
+\frac{B_{\mu+1}}{\mu+1}(a_1^{\mu+1}-1)\,. 
\end{align*}
\label{lem2} 
\end{theorem}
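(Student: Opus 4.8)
The plan is to organize the nonrepresentable integers by their residue modulo $a_1$ and then evaluate a power sum over each resulting arithmetic progression. Since $a_1$ is itself representable, every positive multiple of $a_1$ is representable, so the residue class $0$ contributes nothing (consistent with $m_0=0$). For $1\le i\le a_1-1$, adding $a_1$ preserves representability, so $m_i$ is exactly the threshold above which the class is representable: writing $m_i=i+a_1 q_i$, the nonrepresentable integers in class $i$ are precisely $i,i+a_1,\dots,i+(q_i-1)a_1$. Hence
\[
s_\mu(a_1,\dots,a_k)=\sum_{i=1}^{a_1-1}\sum_{j=0}^{q_i-1}(i+a_1 j)^\mu .
\]

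First I would evaluate the inner arithmetic-progression power sum in closed form using the Bernoulli polynomials $B_n(x)=\sum_{\kappa=0}^{n}\binom{n}{\kappa}B_\kappa x^{n-\kappa}$. Factoring out $a_1$ and telescoping the identity $B_{\mu+1}(x+1)-B_{\mu+1}(x)=(\mu+1)x^\mu$ gives
\[
\sum_{j=0}^{q_i-1}(i+a_1 j)^\mu=\frac{a_1^\mu}{\mu+1}\left(B_{\mu+1}\!\left(\frac{m_i}{a_1}\right)-B_{\mu+1}\!\left(\frac{i}{a_1}\right)\right),
\]
where I have used $i/a_1+q_i=m_i/a_1$. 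Summing over $i$ then splits $s_\mu$ into an $m_i$-part and an $i$-part.

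For the $m_i$-part I would insert the binomial expansion of $B_{\mu+1}(m_i/a_1)$; the powers of $a_1$ combine as $a_1^{\mu}\cdot a_1^{-(\mu+1-\kappa)}=a_1^{\kappa-1}$, producing exactly $\frac{1}{\mu+1}\sum_{\kappa=0}^{\mu+1}\binom{\mu+1}{\kappa}B_\kappa a_1^{\kappa-1}\sum_{i=1}^{a_1-1} m_i^{\mu+1-\kappa}$. For the $i$-part I would apply Raabe's multiplication formula $\sum_{i=0}^{a_1-1}B_{\mu+1}(i/a_1)=a_1^{-\mu}B_{\mu+1}$, which yields $\sum_{i=1}^{a_1-1}B_{\mu+1}(i/a_1)=a_1^{-\mu}B_{\mu+1}-B_{\mu+1}$.

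The final step, which I expect to be the only delicate part, is the bookkeeping that turns these two leftovers into the stated constant. The $\kappa=\mu+1$ term of the $m_i$-sum equals $\frac{B_{\mu+1}}{\mu+1}a_1^\mu(a_1-1)$, while the $i$-part contributes $\frac{B_{\mu+1}}{\mu+1}(a_1^\mu-1)$; adding these collapses to $\frac{B_{\mu+1}}{\mu+1}(a_1^{\mu+1}-1)$, and stripping the $\kappa=\mu+1$ term from the remaining sum restricts the range to $0\le\kappa\le\mu$, giving precisely the claimed formula. As sanity checks, $\mu=0$ recovers the Sylvester-number formula and $\mu=1$ the Sylvester-sum formula of Lemma \ref{lem1}.
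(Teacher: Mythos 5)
Your proof is correct, and its computational core is genuinely different from the paper's. Both arguments begin with the same (essentially forced) decomposition: the nonrepresentable integers in the class $i\pmod{a_1}$ are exactly the $q_i=(m_i-i)/a_1$ terms below $m_i$, so $s_\mu=\sum_{i=1}^{a_1-1}\sum_{j=0}^{q_i-1}(i+a_1j)^\mu$ (the paper lists these elements top-down as $m_i-ja_1$, $1\le j\le\ell_i$, which is the same set). The divergence is in how this double sum is evaluated. The paper expands $(m_i-ja_1)^\mu$ by the binomial theorem first and then sums the powers of $j$ by Faulhaber's formula in Bernoulli-number form; this produces a quadruple sum whose terms must be sorted into three groups: the $j=0$ terms reproduce the main sum via a binomial-coefficient identity, the intermediate terms $0<j<\mu+1-\kappa$ are shown to vanish through alternating-sum identities, and the diagonal terms $n=\mu$, $j=\mu+1-\kappa$ yield the constant $\frac{B_{\mu+1}}{\mu+1}(a_1^{\mu+1}-1)$ only after two further applications of the recurrence $\sum_{\kappa}\binom{\ell+1}{\kappa}B_\kappa=0$. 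You instead sum over $j$ first, in closed form, via the telescoping identity $B_{\mu+1}(x+1)-B_{\mu+1}(x)=(\mu+1)x^\mu$, which collapses each inner sum to $\frac{a_1^\mu}{\mu+1}\bigl(B_{\mu+1}(m_i/a_1)-B_{\mu+1}(i/a_1)\bigr)$; the $m_i$-part then gives the main sum by a single binomial expansion (with the exponent bookkeeping $a_1^{\mu}\cdot a_1^{-(\mu+1-\kappa)}=a_1^{\kappa-1}$ as you state), and the paper's entire cancellation analysis is replaced by one invocation of Raabe's multiplication theorem, $\sum_{i=0}^{a_1-1}B_{\mu+1}(i/a_1)=a_1^{-\mu}B_{\mu+1}$, for the $i$-part. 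Your final assembly also checks out: the $\kappa=\mu+1$ term contributes $\frac{B_{\mu+1}}{\mu+1}a_1^\mu(a_1-1)$, the $i$-part contributes $\frac{B_{\mu+1}}{\mu+1}(a_1^\mu-1)$, and their sum is $\frac{B_{\mu+1}}{\mu+1}(a_1^{\mu+1}-1)$. The trade-off: your route is shorter and more conceptual but imports two standard facts about Bernoulli polynomials (the difference equation and the multiplication theorem), whereas the paper's argument, though far heavier in index manipulation, stays self-contained at the level of Bernoulli numbers and their defining recurrence.
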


For $\mu=0$, we have 
$$
n(a_1,a_2,\dots,a_k)=\frac{1}{a_1}\sum_{i=1}^{a_1-1}m_i-\frac{a_1-1}{2}\,.\quad{\rm \cite{se77}}
$$ 
For $\mu=1,2,\dots,8$, we have 
\begin{align*}  
s_1(a_1,\dots,a_k)&=\frac{1}{2 a_1}\sum_{i=1}^{a_1-1}m_i^2-\frac{1}{2}\sum_{i=1}^{a_1-1}m_i+\frac{a_1^2-1}{12}\,,\quad({\rm \cite{tr08}})\\ 
s_2(a_1,\dots,a_k)&=\frac{1}{3 a_1}\sum_{i=1}^{a_1-1}m_i^3-\frac{1}{2}\sum_{i=1}^{a_1-1}m_i^2+\frac{a_1}{6}\sum_{i=1}^{a_1-1}m_i\,,\\ 
s_3(a_1,\dots,a_k)&=\frac{1}{4 a_1}\sum_{i=1}^{a_1-1}m_i^4-\frac{1}{2}\sum_{i=1}^{a_1-1}m_i^3+\frac{a_1}{4}\sum_{i=1}^{a_1-1}m_i^2-\frac{a_1^4-1}{120}\,,\\ 
s_4(a_1,\dots,a_k)&=\frac{1}{5 a_1}\sum_{i=1}^{a_1-1}m_i^5-\frac{1}{2}\sum_{i=1}^{a_1-1}m_i^4+\frac{a_1}{3}\sum_{i=1}^{a_1-1}m_i^3-\frac{a_1^3}{30}\sum_{i=1}^{a_1-1}m_i\,,\\ 
s_5(a_1,\dots,a_k)&=\frac{1}{6 a_1}\sum_{i=1}^{a_1-1}m_i^6-\frac{1}{2}\sum_{i=1}^{a_1-1}m_i^5+\frac{5 a_1}{12}\sum_{i=1}^{a_1-1}m_i^4-\frac{a_1^3}{12}\sum_{i=1}^{a_1-1}m_i^2\\
&\quad +\frac{a_1^6-1}{252}\,,\\ 
s_6(a_1,\dots,a_k)&=\frac{1}{7 a_1}\sum_{i=1}^{a_1-1}m_i^7-\frac{1}{2}\sum_{i=1}^{a_1-1}m_i^6+\frac{a_1}{2}\sum_{i=1}^{a_1-1}m_i^5-\frac{a_1^3}{6}\sum_{i=1}^{a_1-1}m_i^3\\
&\quad +\frac{a_1^5}{42}\sum_{i=1}^{a_1-1}m_i\,,\\ 
s_7(a_1,\dots,a_k)&=\frac{1}{8 a_1}\sum_{i=1}^{a_1-1}m_i^8-\frac{1}{2}\sum_{i=1}^{a_1-1}m_i^7+\frac{7 a_1}{12}\sum_{i=1}^{a_1-1}m_i^6-\frac{7 a_1^3}{24}\sum_{i=1}^{a_1-1}m_i^4\\
&\quad +\frac{a_1^5}{12}\sum_{i=1}^{a_1-1}m_i^2-\frac{a_1^8-1}{240}\,,\\ 
s_8(a_1,\dots,a_k)&=\frac{1}{9 a_1}\sum_{i=1}^{a_1-1}m_i^9-\frac{1}{2}\sum_{i=1}^{a_1-1}m_i^8+\frac{2 a_1}{3}\sum_{i=1}^{a_1-1}m_i^7-\frac{7 a_1^3}{15}\sum_{i=1}^{a_1-1}m_i^5\\
&\quad +\frac{2 a_1^5}{9}\sum_{i=1}^{a_1-1}m_i^3-\frac{a_1^7}{30}\sum_{i=1}^{a_1-1}m_i\,. 
\end{align*}

\begin{proof}[Proof of Theorem \ref{lem2}] 
For convenience, put $a=a_1$. 
Since 
\begin{align*}
\sum_{j=1}^\ell j^n&=\sum_{\kappa=0}^n\binom{n}{\kappa}(-1)^\kappa B_\kappa\frac{\ell^{n+1-\kappa}}{n+1-\kappa}\\
&=\frac{1}{n+1}\sum_{\kappa=0}^n\binom{n+1}{\kappa}(-1)^\kappa B_\kappa\ell^{n+1-\kappa}\\
&=\frac{1}{n+1}\sum_{\kappa=0}^n\binom{n+1}{\kappa}B_\kappa(\ell+1)^{n+1-\kappa}\\
&=\sum_{\kappa=0}^n\binom{n}{\kappa}B_{n-\kappa}\frac{(\ell+1)^{\kappa+1}}{\kappa+1}
\end{align*}
for $\ell_i=(m_i-i)/a$, we have 
\begin{align*}
&s_\mu(a_1,a_2,\dots,a_k)\\
&=\sum_{i=1}^{a-1}\sum_{j=1}^{\ell_i}(m_i-j a)^\mu\\ 
&=\sum_{i=1}^{a-1}\sum_{j=1}^{\ell_i}\sum_{n=0}^\mu\binom{\mu}{n}m_i^{\mu-n}j^n(-a)^n\\ 
&=\sum_{i=1}^{a-1}\sum_{n=0}^\mu\binom{\mu}{n}m_i^{\mu-n}(-a)^n\sum_{\kappa=0}^n\binom{n}{\kappa}(-1)^\kappa B_\kappa\frac{1}{n+1-\kappa}\left(\frac{m_i-i}{a}\right)^{n+1-\kappa}\\
&=\sum_{n=0}^\mu\binom{\mu}{n}\sum_{\kappa=0}^n\binom{n}{\kappa}\frac{(-1)^{n-\kappa}a^{\kappa-1}B_\kappa}{n+1-\kappa}\sum_{i=1}^{a-1}\sum_{j=0}^{n+1-\kappa}\binom{n+1-\kappa}{j}m_i^{\mu+1-\kappa-j}(-i)^j\,. 
\end{align*}
%%%%%%%%%%%%%%%%%%%%%%%%%%%%%%%%%%%%%%%
Consider the terms for $j=0$. Since 
\begin{align*}  
\sum_{n=\kappa}^\mu\binom{\mu-\kappa+1}{n+1-\kappa}(-1)^{n-\kappa}&=1+\sum_{j=0}^{\mu-\kappa+1}\binom{\mu-\kappa+1}{j}(-1)^{j-1}=1\,, 
\end{align*}
we get 
$$
\sum_{n=\kappa}^\mu\binom{\mu}{n}\binom{n}{\kappa}\frac{(-1)^{n-\kappa}}{n+1-\kappa}=\frac{1}{\mu+1}\binom{\mu+1}{\kappa}\,. 
$$ 
Hence, 
\begin{align*}  
&\sum_{n=0}^\mu\binom{\mu}{n}\sum_{\kappa=0}^n\binom{n}{\kappa}\frac{(-1)^{n-\kappa}a^{\kappa-1}B_\kappa}{n+1-\kappa}\sum_{i=1}^{a-1}m_i^{\mu+1-\kappa}\\ 
&=\sum_{\kappa=0}^\mu\sum_{n=\kappa}^\mu\binom{\mu}{n}\binom{n}{\kappa}\frac{(-1)^{n-\kappa}a^{\kappa-1}B_\kappa}{n+1-\kappa}\sum_{i=1}^{a-1}m_i^{\mu+1-\kappa}\\ 
&=\frac{1}{\mu+1}\sum_{\kappa=0}^{\mu}\binom{\mu+1}{\kappa}B_{\kappa}a^{\kappa-1}\sum_{i=1}^{a_1-1}m_i^{\mu+1-\kappa}\,. 
\end{align*}
%%%%%%%%%%%%%%%%%%%%%%%%%%%%%%%%%
If $0<j<\mu+1-\kappa$, by 
\begin{align*}
&\sum_{n=\kappa}^\mu\binom{\mu}{n}\binom{n}{\kappa}\frac{(-1)^{n-\kappa}}{n+1-\kappa}\binom{n+1-\kappa}{j}\\
&=\frac{(-1)^{j-1}\mu!}{j!\kappa!(\mu+1-\kappa-j)!}\sum_{n=0}^{\mu-\kappa}\binom{\mu+1-\kappa-j}{n}(-1)^{\mu+1-\kappa-j-n}\\
&=0\,, 
\end{align*}
all the terms for $m_i^{\mu+1-\kappa-j}(-i)^j$ with $\mu+1-\kappa-j>0$ and $j>0$ are vanished. 
%%%%%%%%%%%%%%%%%%%%%%%%%%%%%%%%%%%
If $n=\mu$ and $j=\mu+1-\kappa$, 
\begin{align}  
&\sum_{\kappa=0}^\mu\binom{\mu}{\kappa}\frac{(-1)^{\mu-\kappa}a^{\kappa-1}B_\kappa}{\mu+1-\kappa}\sum_{i=1}^{a-1}(-i)^{\mu+1-\kappa}\notag\\
&=-\sum_{\kappa=0}^\mu\binom{\mu}{\kappa}\frac{a^{\kappa-1}B_\kappa}{\mu+1-\kappa}\sum_{i=1}^{a-1}i^{\mu+1-\kappa}\notag\\
&=-\sum_{\kappa=0}^\mu\binom{\mu}{\kappa}\frac{a^{\kappa-1}B_\kappa}{\mu+1-\kappa}\sum_{k=0}^{\mu+1-\kappa}\binom{\mu+1-\kappa}{k}B_{\mu+1-\kappa-k}\frac{a^{k+1}}{k+1}\notag\\
&=-\sum_{\kappa=0}^\mu\binom{\mu}{\kappa}\frac{B_\kappa B_{\mu+1-\kappa}}{\mu+1-\kappa}a^\kappa\notag\\
&\quad -\sum_{\kappa=0}^\mu\sum_{k=1}^{\mu+1-\kappa}\binom{\mu}{\kappa}\binom{\mu+1-\kappa}{k}\frac{B_\kappa B_{\mu+1-\kappa-k}}{(\mu+1-\kappa)(k+1)}a^{\kappa+k}\notag\\
&=-\sum_{\ell=0}^\mu\binom{\mu}{\ell}\frac{B_\ell B_{\mu+1-\ell}}{\mu+1-\ell}a^\ell\notag\\
&\quad -\sum_{\ell=1}^{\mu+1}\sum_{\kappa=0}^{\ell-1}\binom{\mu}{\kappa}\binom{\mu+1-\kappa}{\ell-\kappa}\frac{B_\kappa B_{\mu+1-\ell}}{(\mu+1-\kappa)(\ell+1-\kappa)}a^{\ell}\,. 
\label{eq:20}
\end{align}
By using the recurrence relation 
$$
\sum_{\kappa=0}^{\mu+1}\binom{\mu+2}{\kappa}B_\kappa=0\,, 
$$ 
the term of $a^{\mu+1}$ yields from the second sum in (\ref{eq:20}) as 
$$
-\sum_{\kappa=0}^\mu\binom{\mu}{\kappa}\frac{B_\kappa a^{\mu+1}}{(\mu+1-\kappa)(\mu+2-\kappa)}=\frac{B_{\mu+1}}{\mu+1}a^{\mu+1}\,.  
$$   
The constant term yields from the first sum in (\ref{eq:20}) as 
$$
-\frac{B_{\mu+1}}{\mu+1}\,. 
$$ 
Other terms of $a^\ell$ ($1\le\ell\le\mu$) are cancelled, so vanished, because by 
$$
\sum_{\kappa=0}^\ell\binom{\ell+1}{\kappa}B_\kappa=0\,, 
$$ 
we get 
$$
\sum_{\kappa=0}^{\ell-1}\binom{\mu}{\kappa}\binom{\mu+1-\kappa}{\ell-\kappa}\frac{B_\kappa}{(\mu+1-\kappa)(\ell+1-\kappa)}=-\binom{\mu}{\ell}\frac{B_\ell}{\mu+1-\ell}\,.  
$$ 
Hence, we obtain the term 
\begin{align*} 
\sum_{\kappa=0}^{\mu}\binom{\mu}{\kappa}\frac{(-1)^{\mu-\kappa}a^{\kappa-1}B_\kappa}{\mu+1-\kappa}\sum_{i=1}^{a-1}(-i)^{\mu+1-\kappa}
=\frac{B_{\mu+1}}{\mu+1}(a^{\mu+1}-1)\,. 
\end{align*} 
\end{proof}

%%%%%%%%%%
%%%%%%%%%%
%%%%%%%%%%

\subsection{Arithmetic sequences}

Theorem \ref{lem2} is so useful that as long as one can calculate the sum of representable numbers $m_i\equiv i\pmod{a_1}$ ($1\le i\le a_1-1$), we obtain the power sum of nonrepresentable numbers.   In this paper, we apply Theorem \ref{lem2} to obtain the power sum of nonrepresentable numbers representation by arithmetic sequences.   
Let $a$, $d$ and $k$ be positive integers with $\gcd(a,d)=1$ and $2\le k\le a$. 
Roberts \cite{ro56} found the Frobenius number for arithmetic sequences. 
$$
g(a,a+d,\dots,a+(k-1)d)=\fl{\frac{a-2}{k-1}}a+(a-1)d\,.
$$ 
The case $d=1$ had been found by Brauer \cite{br42}.  
Selmer \cite{se77} generalized Roberts' result by giving a formula for almost arithmetic sequences. 
%For a positive integer $h$, 
%$$
%g(a,h a+d,\dots,h a+(k-1)d)=\left(h\fl{\frac{a-2}{k-1}}+h-1\right)a+(a-1)d\,.
%$$ 

Let $a-1=q(k-1)+r$ with $0\le r<k-1$. Grant \cite{gr73} obtained a formula for the number of positive integers with no nonnegative integer representation by arithmetic sequences.  
$$
n(a,a+d,\dots,a+(k-1)d)=\frac{1}{2}\bigl((a-1)(q+d)+r(q+1)\bigr)\,.
$$ 
Selmer \cite{se77} generalized Grant's result by giving a formula for almost arithmetic sequences. 
%For a positive integer $h$, 
%$$
%n(a,h a+d,\dots,h a+(k-1)d)=\frac{1}{2}\bigl((a-1)(h q+d+h-1)+h r(q+1)\bigr)\,.
%$$ 
Note that $q>0$ because $k\le a$. 
The sum of nonrepresentable numbers in arithmetic progression are given explicitly as follows.

Since the minimal residue system $\{m_i\}$ ($1\le i\le a_1-1$) is given by 

\begin{table}[h]
%[htbp]
  \centering
%\scalebox{0.7}{
\begin{tabular}{cccccc}
\cline{1-2}\cline{3-4}\cline{5-6}
\multicolumn{1}{|c}{$a_2$}&\multicolumn{1}{|c}{$a_3$}&\multicolumn{1}{|c}{$\cdots$}&\multicolumn{1}{|c}{$a_{r+1}$}&\multicolumn{1}{|c}{$\cdots$}&\multicolumn{1}{|c|}{$a_k$}\\
\cline{1-2}\cline{3-4}\cline{5-6}
\multicolumn{1}{|c}{$a_2+a_k$}&\multicolumn{1}{|c}{$a_3+a_k$}&\multicolumn{1}{|c}{$\cdots$}&\multicolumn{1}{|c}{$a_{r+1}+a_k$}&\multicolumn{1}{|c}{$\cdots$}&\multicolumn{1}{|c|}{$2 a_k$}\\
\cline{1-2}\cline{3-4}\cline{5-6}
\multicolumn{1}{|c}{$\vdots$}&\multicolumn{1}{|c}{$\vdots$}&\multicolumn{1}{|c}{$\vdots$}&\multicolumn{1}{|c}{$\vdots$}&\multicolumn{1}{|c}{$\vdots$}&\multicolumn{1}{|c|}{$\vdots$}\\
\cline{1-2}\cline{3-4}\cline{5-6}
\multicolumn{1}{|c}{$a_2+(q-1)a_k$}&\multicolumn{1}{|c}{$a_3+(q-1)a_k$}&\multicolumn{1}{|c}{$\cdots$}&\multicolumn{1}{|c}{$a_{r+1}+(q-1)a_k$}&\multicolumn{1}{|c}{$\cdots$}&\multicolumn{1}{|c|}{$q a_k$}\\
\cline{1-2}\cline{3-4}\cline{5-6}
\multicolumn{1}{|c}{$a_2+q a_k$}&\multicolumn{1}{|c}{$a_3+q a_k$}&\multicolumn{1}{|c}{$\cdots$}&\multicolumn{1}{|c|}{$a_{r+1}+q a_k$}&&\\
\cline{1-2}\cline{3-4} 
\end{tabular}
%} 
\caption{} 
\label{minres}
\end{table} 

\noindent  
(\cite[(3.8)]{se77}), that is, 
\begin{table}[h]
%[htbp]
  \centering
\scalebox{0.6}[0.6]{
\begin{tabular}{cccccc}
\cline{1-2}\cline{3-4}\cline{5-6}
\multicolumn{1}{|c}{$a+d$}&\multicolumn{1}{|c}{$a+2 d$}&\multicolumn{1}{|c}{$\cdots$}&\multicolumn{1}{|c}{$a+r d$}&\multicolumn{1}{|c}{$\cdots$}&\multicolumn{1}{|c|}{$a+(k-1)d$}\\
\cline{1-2}\cline{3-4}\cline{5-6}
\multicolumn{1}{|c}{$2 a+k d$}&\multicolumn{1}{|c}{$2 a+(k+1)d$}&\multicolumn{1}{|c}{$\cdots$}&\multicolumn{1}{|c}{$2 a+(k-1+r)d$}&\multicolumn{1}{|c}{$\cdots$}&\multicolumn{1}{|c|}{$2 a+2(k-1)d$}\\
\cline{1-2}\cline{3-4}\cline{5-6}
\multicolumn{1}{|c}{$\vdots$}&\multicolumn{1}{|c}{$\vdots$}&\multicolumn{1}{|c}{$\vdots$}&\multicolumn{1}{|c}{$\vdots$}&\multicolumn{1}{|c}{$\vdots$}&\multicolumn{1}{|c|}{$\vdots$}\\
\cline{1-2}\cline{3-4}\cline{5-6}
\multicolumn{1}{|c}{$q a+((q-1)(k-1)+1)d$}&\multicolumn{1}{|c}{$q a+((q-1)(k-1)+2)d$}&\multicolumn{1}{|c}{$\cdots$}&\multicolumn{1}{|c}{$q a+((q-1)(k-1)+r)d$}&\multicolumn{1}{|c}{$\cdots$}&\multicolumn{1}{|c|}{$q a+q(k-1)d$}\\
\cline{1-2}\cline{3-4}\cline{5-6}
\multicolumn{1}{|c}{$(q+1)a+(q(k-1)+1)d$}&\multicolumn{1}{|c}{$(q+1)a+(q(k-1)+2)d$}&\multicolumn{1}{|c}{$\cdots$}&\multicolumn{1}{|c|}{$(q+1)a+(q(k-1)+r)d$}&&\\
\cline{1-2}\cline{3-4} 
\end{tabular}
} 
\caption{} 
\end{table} 

\noindent 
Here, $q(k-1)+r=a-1$.  
Since $\gcd(a,d)=1$, 
$$
\{d,2d,\dots,(a-1)d\}=\{1,2,\dots,a-1\}\pmod a\,. 
$$ 
Then, the $\nu$-th sum of all the elements is calculated as   
\begin{align*}  
&\sum_{i=1}^{a-1}m_i^\nu\\ 
&=\sum_{l=0}^\nu\binom{\nu}{l}a^{\nu-l}\bigl(d^l+(2 d)^l+\cdots+((k-1)d)^l\bigr)\\  
&\quad +\sum_{l=0}^\nu\binom{\nu}{l}(2 a)^{\nu-l}\bigl((k d)^l+((k+1)d)^l+\cdots+((2 k-2)d)^l\bigr)\\  
&\quad +\cdots\\ 
&\quad +\sum_{l=0}^\nu\binom{\nu}{l}(q a)^{\nu-l}\bigl((((q-1)k-q+2)d)^l\\
&\qquad +(((q-1)k-q+3)d)^l+\cdots+((q k-q)d)^l\bigr)\\   
&\quad +\sum_{l=0}^\nu\binom{\nu}{l}((q+1)a)^{\nu-l}\bigl(((q k-q+1)d)^l+\cdots+((q k-q+r)d)^l
\bigr)\\ 
&=\sum_{l=0}^\nu\binom{\nu}{l}\frac{a^{\nu-l}d^l}{l+1}\sum_{j=0}^l\binom{l+1}{j}B_j(k^{l+1-j}-1)\\
&\quad +\sum_{l=0}^\nu\binom{\nu}{l}\frac{(2 a)^{\nu-l}d^l}{l+1}\sum_{j=0}^l\binom{l+1}{j}B_j\bigl((2 k-1)^{l+1-j}-k^{l+1-j}\bigr)\\ 
&\quad +\cdots\\
&\quad +\sum_{l=0}^\nu\binom{\nu}{l}\frac{(q a)^{\nu-l}d^l}{l+1}\sum_{j=0}^l\binom{l+1}{j}B_j\\
&\qquad\times\bigl((q k-q+1)^{l+1-j}-((q-1)k-q+2)^{l+1-j}\bigr)\\ 
&\quad +\sum_{l=0}^\nu\binom{\nu}{l}\frac{((q+1)a)^{\nu-l}d^l}{l+1}\sum_{j=0}^l\binom{l+1}{j}B_j\\
&\qquad\times\bigl((q k-q+r+1)^{l+1-j}-(q k-q+1)^{l+1-j}\bigr)\\
&=\sum_{l=0}^\nu\binom{\nu}{l}\frac{a^{\nu-l}d^l}{l+1}\sum_{j=0}^l\binom{l+1}{j}B_j\biggl(-1-(2^{\nu-l}-1)k^{l+1-j}\\
&\qquad -(3^{\nu-l}-2^{\nu-l})(2 k-1)^{l+1-j}-\cdots\\
&\qquad -(q^{\nu-l}-(q-1)^{\nu-l})((q-1)k-q+2)^{l+1-j}\\
&\qquad -((q+1)^{\nu-l}-q^{\nu-l})(q k-q+1)^{l+1-j}\\
&\qquad +(q+1)^{\nu-l}(q k-q+r+1)^{l+1-j}
\biggr)\\
&=\sum_{l=0}^\nu\binom{\nu}{l}\frac{a^{\nu-l}d^l}{l+1}\sum_{j=0}^l\binom{l+1}{j}B_j\biggl((q+1)^{\nu-l}(q k-q+r+1)^{l+1-j}\\
&\quad -1-\sum_{i=1}^q\bigl((i+1)^{\nu-l}-i^{\nu-l}\bigr)\bigl(i(k-1)+1\bigr)^{l+1-j} 
\biggr)\,.  
\end{align*}
By applying Theorem \ref{lem2}, we obtain 
\begin{align*}  
&s_\mu(a,a+d,\dots,a+(k-1)d)\\
&=\frac{1}{\mu+1}\sum_{\kappa=0}^{\mu}\binom{\mu+1}{\kappa}B_{\kappa}a^{\kappa-1}\sum_{l=0}^{\mu+1-\kappa}\binom{\mu+1-\kappa}{l}\frac{a^{\mu+1-\kappa-l}d^l}{l+1}\\
&\qquad\times \sum_{j=0}^l\binom{l+1}{j}B_j\biggl((q+1)^{\mu+1-\kappa-l}a^{l+1-j}-1\\
&\qquad\quad -\sum_{i=1}^q\bigl((i+1)^{\mu+1-\kappa-l}-i^{\mu+1-\kappa-l}\bigr)\bigl(i(k-1)+1\bigr)^{l+1-j} 
\biggr)\\  
&\quad +\frac{B_{\mu+1}}{\mu+1}(a^{\mu+1}-1)\\
&=\frac{1}{\mu+1}\sum_{\kappa=0}^{\mu}\sum_{l=0}^{\mu+1-\kappa}\sum_{j=0}^l\binom{\mu+1}{\kappa}\binom{\mu+1-\kappa}{l}\binom{l+1}{j}\frac{B_{\kappa}B_j a^{\mu-l}d^l}{l+1}\\
&\qquad \times \biggl((q+1)^{\mu+1-\kappa-l}a^{l+1-j}-1\\
&\qquad\quad -\sum_{i=1}^q\bigl((i+1)^{\mu+1-\kappa-l}-i^{\mu+1-\kappa-l}\bigr)\bigl(i(k-1)+1\bigr)^{l+1-j} 
\biggr)\\  
&\quad +\frac{B_{\mu+1}}{\mu+1}(a^{\mu+1}-1)\,. 
\end{align*}

Then, by $q=\fl{(a-1)/(k-1)}$, we have an explicit expression of 
the power sum of nonrepresentable numbers in arithmetic progression.

\begin{theorem}  
For positive integers $a$, $d$ and $k$ with $\gcd(a,d)=1$ and $2\le k\le a$, we have 
\begin{align*}  
&s_\mu(a,a+d,\dots,a+(k-1)d)\\
&=\frac{1}{\mu+1}\sum_{\kappa=0}^{\mu}\sum_{l=0}^{\mu+1-\kappa}\sum_{j=0}^l\binom{\mu+1}{\kappa}\binom{\mu+1-\kappa}{l}\binom{l+1}{j}\frac{B_{\kappa}B_j a^{\mu-l}d^l}{l+1}\\
&\qquad \times \biggl(\left(\fl{\frac{a-1}{k-1}}+1\right)^{\mu+1-\kappa-l}a^{l+1-j}-1\\
&\qquad\quad -\sum_{i=1}^{\fl{(a-1)/(k-1)}}\bigl((i+1)^{\mu+1-\kappa-l}-i^{\mu+1-\kappa-l}\bigr)\bigl(i(k-1)+1\bigr)^{l+1-j} 
\biggr)\\  
&\quad +\frac{B_{\mu+1}}{\mu+1}(a^{\mu+1}-1)\,.  
\end{align*}
\label{th:pw-arith}
\end{theorem}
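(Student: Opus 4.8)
The plan is to reduce the whole statement to Theorem~\ref{lem2} and then supply the one missing ingredient for the arithmetic case: a closed evaluation of the minimal-residue power sum $\sum_{i=1}^{a-1}m_i^\nu$ for each exponent $\nu=\mu+1-\kappa$ that occurs there. Writing $a=a_1$, Theorem~\ref{lem2} already expresses $s_\mu$ as a Bernoulli-weighted combination of these power sums together with the closed term $\frac{B_{\mu+1}}{\mu+1}(a^{\mu+1}-1)$, so once each $\sum_{i=1}^{a-1}m_i^{\mu+1-\kappa}$ is known explicitly the theorem follows by substitution.

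First I would read off Selmer's minimal residue system \cite{se77} from the displayed tables. Since $\gcd(a,d)=1$, the residues $d,2d,\dots,(a-1)d$ exhaust the nonzero classes modulo $a$, so I may index the $m_i$ by the coefficient $c$ of $d$, with $c$ ranging over $1,\dots,a-1$. The table then says that the minimal representative carrying coefficient $c$ is $m=t\,a+c\,d$ with row index $t=\lceil c/(k-1)\rceil$: the rows group the $c$'s into consecutive blocks, row $t$ carrying $c\in\{(t-1)(k-1)+1,\dots,t(k-1)\}$ for $1\le t\le q$, and the partial row $q+1$ carrying $c\in\{q(k-1)+1,\dots,q(k-1)+r\}$, where $a-1=q(k-1)+r$ with $0\le r<k-1$. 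I would then binomially expand $(t a+c d)^\nu=\sum_{l=0}^\nu\binom{\nu}{l}a^{\nu-l}t^{\nu-l}d^l c^l$ and sum over each row; inside row $t$ the coefficients $c$ form a consecutive integer block, so $\sum_c c^l$ is a difference of two Faulhaber sums, which I evaluate by the Bernoulli identity $\sum_{j=1}^{\ell}j^l=\frac{1}{l+1}\sum_{j=0}^l\binom{l+1}{j}B_j(\ell+1)^{l+1-j}$. This produces, for each pair $(l,j)$, a term $\bigl((t(k-1)+1)^{l+1-j}-((t-1)(k-1)+1)^{l+1-j}\bigr)$ weighted by $t^{\nu-l}$, and it remains to sum these over $t=1,\dots,q+1$.

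The key step, and the one demanding the most care, is the summation by parts across rows. Setting $P_t=(t(k-1)+1)^{l+1-j}$ with boundary value $P_0=1$ and top value $P_{q+1}=(q(k-1)+r+1)^{l+1-j}$, the row sum $\sum_{t=1}^{q+1}t^{\nu-l}(P_t-P_{t-1})$ telescopes by Abel summation into $(q+1)^{\nu-l}(qk-q+r+1)^{l+1-j}-1-\sum_{i=1}^q\bigl((i+1)^{\nu-l}-i^{\nu-l}\bigr)(i(k-1)+1)^{l+1-j}$, where the isolated $-1$ is exactly the contribution of the boundary term $P_0=1$. This is precisely the bracket appearing in the statement.

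Finally I would feed this closed form for $\sum_{i=1}^{a-1}m_i^{\mu+1-\kappa}$ into Theorem~\ref{lem2}, merge the outer summation index $\kappa$ with the inner indices $l$ and $j$ into the displayed triple sum, collect the factor $\frac{B_\kappa B_j a^{\mu-l}d^l}{l+1}$ from the two Bernoulli layers, and set $q=\fl{(a-1)/(k-1)}$ to obtain the stated expression (with the leftover $\frac{B_{\mu+1}}{\mu+1}(a^{\mu+1}-1)$ carried through unchanged). The only genuine obstacle here is bookkeeping: keeping the three nested indices $(\kappa,l,j)$ consistent across the substitution and verifying that the two boundary terms of the Abel summation land correctly, since the computation is otherwise entirely mechanical.
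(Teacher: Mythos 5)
Your proposal is correct and follows essentially the same route as the paper: both reduce the statement to Theorem \ref{lem2}, evaluate $\sum_{i=1}^{a-1}m_i^{\nu}$ row by row from Selmer's minimal residue system via binomial expansion and the Bernoulli--Faulhaber formula, and then rearrange the row contributions (your Abel summation is exactly the paper's regrouping into the bracket $(q+1)^{\nu-l}(qk-q+r+1)^{l+1-j}-1-\sum_{i=1}^{q}((i+1)^{\nu-l}-i^{\nu-l})(i(k-1)+1)^{l+1-j}$), before substituting with $q=\fl{(a-1)/(k-1)}$ and $q(k-1)+r+1=a$. The only difference is presentational: you name the telescoping step explicitly as summation by parts, whereas the paper writes out the regrouped terms directly.
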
 

\noindent 
{\it Remark.}   
Setting $d=b-a$ and $k=2$, Theorem \ref{th:pw-arith} provides a different expression of power sum for two positive integers $a$ and $b$ from R\o dseth's one in \cite{ro94}. 
Setting $d=b-a$, $k=2$ and $\mu=1$, Theorem \ref{th:pw-arith} is reduced to the formula (\ref{brown}).

\subsection{Examples}  

Consider the sequence $13,16,19,22,25$. So, we see that $a=13$, $d=3$, $k=5$, $q=3$ and $r=0$. Then by Theorem \ref{th:pw-arith},  
\begin{align*}  
s_1(13,16,19,22,25)&=894\,,\\ 
s_2(13,16,19,22,25)&=33150\,,\\ 
s_3(13,16,19,22,25)&=1463868\,,\\ 
s_4(13,16,19,22,25)&=71099730\,,\\ 
s_5(13,16,19,22,25)&=3663620844\,,\\ 
s_6(13,16,19,22,25)&=196356363450\,,\\ 
s_7(13,16,19,22,25)&=10815989768148\,. 
\end{align*}
In fact, the power sum of nonrepresentable numbers is expressed as 
\begin{align*}
&1^\mu + 2^\mu + 3^\mu + 4^\mu + 5^\mu + 6^\mu + 7^\mu + 8^\mu + 9^\mu + 10^\mu + 11^\mu\\
& + 12^\mu + 14^\mu + 15^\mu + 17^\mu + 18^\mu + 20^\mu + 21^\mu + 23^\mu + 24^\mu + 27^\mu\\ 
&+ 28^\mu + 30^\mu + 31^\mu + 33^\mu + 34^\mu + 36^\mu + 37^\mu + 40^\mu + 43^\mu + 46^\mu\\
&+ 49^\mu + 53^\mu + 56^\mu + 59^\mu + 62^\mu\,. 
\end{align*}

Consider the sequences 
\begin{align*}  
&{\rm (1)}\quad 25,29,33,37,41,45,49,53,57;\\ 
&{\rm (2)}\quad 25,29,33,37,41,45,49,53,57,61;\\ 
&{\rm (3)}\quad 25,29,33,37,41,45,49,53,57,61,65;\\ 
&{\rm (4)}\quad 25,29,33,37,41,45,49,53,57,61,65,69\,. 
\end{align*} 
They give the same value of Frobenius number $g(A)=146$. However, the values of Sylvester number and Sylvester sum are different. In the case of power sums, by Theorem \ref{th:pw-arith} with $a=25$, $d=4$ and (1) $k=9$, $q=3$, $r=0$; (2) $k=10$, $q=2$, $r=6$; (3) $k=11$, $q=2$, $r=4$; (4) $k=12$, $q=2$, $r=2$, we see that for $\mu=6$ 
\begin{align*}  
s_6(25,29,33,37,41,45,49,53,57)&=64005202245000\,,\\
s_6(25,29,33,37,41,45,49,53,57,61)&=57956823758511\,,\\
s_6(25,29,33,37,41,45,49,53,57,61,65)&=49053091726510\,,\\
s_6(25,29,33,37,41,45,49,53,57,61,65,69)&=36249074667429\,.
\end{align*}

\section{Weighted sums} 

Let $\sts{n}{m}$ denote the Stirling numbers of the second kind, calculated as 
$$
\sts{n}{m}=\frac{1}{m!}\sum_{i=0}^m(-1)^i\binom{m}{i}(m-i)^n\,. 
$$   
Assume that $\lambda^a\ne 1$ and $\lambda^d\ne 1$. Since for nonnegative integer $\nu$ and positive integer $a$ 
\begin{equation}
a^\nu x^a=\sum_{i=0}^\nu\sts{\nu}{i}x^i\frac{d^i}{d x^i}x^a\,, 
\label{eq:432} 
\end{equation}  
the weighted sum of the first line in Table \ref{minres} is equal to 
\begin{align*}  
&a_2^\nu\lambda^{a_2}+a_3^\nu\lambda^{a_3}+\cdots+a_k^\nu\lambda^{a_k}\\
&=\sum_{i=0}^\nu\sts{\nu}{i}\lambda^i\left[\frac{d^i}{d x^i}(x^{a_2}+x^{a_3}+\cdots+x^{a_k})\right]_{x=\lambda}\\ 
&=\sum_{i=0}^\nu\sts{\nu}{i}\lambda^i\left[\frac{d^i}{d x^i}(x^{a+d}+x^{a+2 d}+\cdots+x^{a+(k-1)d})\right]_{x=\lambda}\\ 
&=\sum_{i=0}^\nu\sts{\nu}{i}\lambda^i\left[\frac{d^i}{d x^i}\left(x^{a+d}\frac{x^{(k-1)d}-1}{x^d-1}\right)\right]_{x=\lambda}\,.  
\end{align*} 
Similarly, the weighted sum of the second line in Table \ref{minres} is equal to 
\begin{align*}  
&\sum_{i=0}^\nu\sts{\nu}{i}\lambda^i\left[\frac{d^i}{d x^i}(x^{2 a+k d}+x^{2 a+(k+1)d}+\cdots+x^{2 a+2(k-1)d})\right]_{x=\lambda}\\ 
&=\sum_{i=0}^\nu\sts{\nu}{i}\lambda^i\left[\frac{d^i}{d x^i}\left(x^{2 a+k d}\frac{x^{(k-1)d}-1}{x^d-1}\right)\right]_{x=\lambda}\,.  
\end{align*} 
The weighted sums of the third, and eventually the $q$-th line in Table \ref{minres} are given by 
\begin{align*} 
&\sum_{i=0}^\nu\sts{\nu}{i}\lambda^i\left[\frac{d^i}{d x^i}\left(x^{3 a+(2 k-1)d}\frac{x^{(k-1)d}-1}{x^d-1}\right)\right]_{x=\lambda}\,,\\
&\sum_{i=0}^\nu\sts{\nu}{i}\lambda^i\left[\frac{d^i}{d x^i}\left(x^{q a+((q-1)k-q+2)d}\frac{x^{(k-1)d}-1}{x^d-1}\right)\right]_{x=\lambda}\,,    
\end{align*}  
respectively.  Finally, if $r>0$, the weighted sum of the final line is given by\begin{align*}  
&\sum_{i=0}^\nu\sts{\nu}{i}\lambda^i\left[\frac{d^i}{d x^i}(x^{(q+1)a+(q k-q+1)d}+\cdots+x^{(q+1)a+(q k-q+r)d})\right]_{x=\lambda}\\ 
&=\sum_{i=0}^\nu\sts{\nu}{i}\lambda^i\left[\frac{d^i}{d x^i}\left(x^{(q+1)a+(q k-q+1)d}\frac{x^{r d}-1}{x^d-1}\right)\right]_{x=\lambda}\,.  
\end{align*}  
Hence, for $\nu\ge 1$, we obtain that 
\begin{align*} 
&\sum_{i=0}^{a-1}\lambda^{m_i}m_i^\nu=\sum_{i=1}^{a-1}\lambda^{m_i}m_i^\nu\\
&=\sum_{i=0}^\nu\sts{\nu}{i}\lambda^i\biggl[\frac{d^i}{d x^i}\biggl(
\bigl(x^{a+d}+x^{2 a+k d}+x^{3 a+(2 k-1)d}+\cdots\\
&\quad   +x^{q a+((q-1)k-q+2)d}\bigr)\frac{x^{(k-1)d}-1}{x^d-1} 
 +x^{(q+1)a+(q k-q+1)d}\frac{x^{r d}-1}{x^d-1} 
\biggr)\biggr]_{x=\lambda}\\ 
&=\sum_{i=0}^\nu\sts{\nu}{i}\lambda^i\biggl[\frac{d^i}{d x^i}\biggl( 
x^{a+d}\frac{x^{q a_k}-1}{x^{a_k}-1}\frac{x^{(k-1)d}-1}{x^d-1}
+x^{(q+1)a+(q k-q+1)d}\frac{x^{r d}-1}{x^d-1} 
\biggr)\biggr]_{x=\lambda}\\ 
&=\sum_{i=0}^\nu\sts{\nu}{i}\lambda^i\biggl[\frac{d^i}{d x^i}\biggl( 
\frac{x^{d}(x^{a_k}-x^{a})}{x^d-1}\frac{x^{q a_k}-1}{x^{a_k}-1}
+\frac{x^{q a_k+a+d}(x^{r d}-1)}{x^d-1} 
\biggr)\biggr]_{x=\lambda}\,.  
\end{align*} 
For $\nu=0$, by adding the term $\lambda^{m_0}=1$, we have 
$$
\sum_{i=0}^{a-1}\lambda^{m_i}=1+ 
\frac{\lambda^{d}(\lambda^{a_k}-\lambda^{a})}{\lambda^d-1}\frac{\lambda^{q a_k}-1}{\lambda^{a_k}-1}
+\frac{\lambda^{q a_k+a+d}(\lambda^{r d}-1)}{\lambda^d-1}\,. 
$$ 
Therefore, for $\nu\ge 0$, 
\begin{multline}
\sum_{i=0}^{a-1}\lambda^{m_i}m_i^\nu\\
=\sum_{i=0}^\nu\sts{\nu}{i}\lambda^i\biggl[\frac{d^i}{d x^i}\biggl(1+ 
\frac{x^{d}(x^{a_k}-x^{a})}{x^d-1}\frac{x^{q a_k}-1}{x^{a_k}-1}
+\frac{x^{q a_k+a+d}(x^{r d}-1)}{x^d-1} 
\biggr)\biggr]_{x=\lambda}\,.  
\label{eq:lammu}
\end{multline}

Now, we apply the following formula in \cite{KZ} to obtain the weighted sum of nonrepresentable numbers in arithmetic progression. Here, Eulerian numbers $\eul{n}{m}$ appear in the generating function
\begin{equation}
\sum_{k=0}^\infty k^n x^k=\frac{1}{(1-x)^{n+1}}\sum_{m=0}^{n-1}\eul{n}{m}x^{m+1}\quad(n\ge 1)
\label{eu:gf}
\end{equation} 
with $0^0=1$ and $\eul{0}{0}=1$ (\cite[p.244]{com74}), 
and have an explicit formula 
$$
\eul{n}{m}=\sum_{k=0}^{m}(-1)^k\binom{n+1}{k}(m-k+1)^n
$$   
(\cite[p.243]{com74},\cite{gkp89}).  Similarly, for $1\le i\le a_1-1$, let $m_i$ be the least positive integer congruent to $i\pmod{a_1}$ in ${\rm R}(A)$, where $A=\{a_1,a_2,\dots,a_k\}$. For convenience, put $m_0=0$.    

\begin{Lem}[{\rm \cite{KZ}}]  
Assume that $\lambda\ne 0$ and $\lambda^{a_1}\ne 1$. Then for a positive integer $\mu$,  
\begin{align*}  
s_\mu^{(\lambda)}(A)&=\sum_{n=0}^\mu\frac{(-a_1)^n}{(\lambda^{a_1}-1)^{n+1}}\binom{\mu}{n}\sum_{j=0}^n\eul{n}{n-j}\lambda^{j a_1}\sum_{i=0}^{a_1-1}m_i^{\mu-n}\lambda^{m_i}\\
&\quad +\frac{(-1)^{\mu+1}}{(\lambda-1)^{\mu+1}}\sum_{j=0}^\mu\eul{\mu}{\mu-j}\lambda^j\,.
\end{align*}
\label{lem-hh}
\end{Lem}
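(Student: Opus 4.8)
The plan is to mimic the residue-class decomposition behind Theorem \ref{lem2}, but to replace the finite Bernoulli sum over $j$ by the infinite Eulerian generating function (\ref{eu:gf}), which is exactly where the hypothesis $\lambda^{a_1}\neq1$ will be used. Write $a=a_1$. For each residue $i$ with $0\le i\le a-1$, the nonnegative integers congruent to $i$ modulo $a$ are $i,i+a,i+2a,\dots$, while the representable ones are precisely $m_i,m_i+a,m_i+2a,\dots$; hence the nonrepresentable ones in this class are obtained by subtracting the second list from the first. Since $m_0=0$, the class $i=0$ contributes nothing, so I may freely include it and write, at least for $|\lambda|<1$ where the series converge,
\[
s_\mu^{(\lambda)}(A)=\sum_{i=0}^{a-1}\Bigl(\sum_{t\ge0}\lambda^{i+ta}(i+ta)^\mu-\sum_{t\ge0}\lambda^{m_i+ta}(m_i+ta)^\mu\Bigr).
\]

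For a fixed starting value $c$, the binomial theorem gives $\sum_{t\ge0}\lambda^{c+ta}(c+ta)^\mu=\lambda^c\sum_{n=0}^\mu\binom{\mu}{n}c^{\mu-n}a^n S_n(\lambda^a)$, where $S_n(x):=\sum_{t\ge0}t^n x^t$. Applying (\ref{eu:gf}) with $x=\lambda^a$ (this is where $\lambda^a\neq1$ enters, both for convergence and to clear the denominator $(1-x)^{n+1}$), together with the symmetry $\eul{n}{m}=\eul{n}{n-1-m}$ and the reindexing $m\mapsto n-j$, one verifies the clean identity $\frac{(-a)^n}{(\lambda^a-1)^{n+1}}\sum_{j=0}^n\eul{n}{n-j}\lambda^{ja}=-a^nS_n(\lambda^a)$, the case $n=0$ being covered by the convention $\eul{0}{0}=1$. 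Substituting this into the representable-tail part $-\sum_i\lambda^{m_i}\sum_n\binom{\mu}{n}m_i^{\mu-n}a^nS_n(\lambda^a)$, the two minus signs combine and reproduce exactly the first term in the statement.

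The second term comes from the ``full class'' sums. The key observation is that as $i$ runs over $0,\dots,a-1$ and $t$ over the nonnegative integers, $i+ta$ runs over every nonnegative integer exactly once; hence $\sum_{i=0}^{a-1}\sum_{t\ge0}\lambda^{i+ta}(i+ta)^\mu=\sum_{N\ge0}N^\mu\lambda^N$, and applying (\ref{eu:gf}) directly with $x=\lambda$ (using $\lambda\neq1$, which follows from $\lambda^a\neq1$) gives $\frac{1}{(1-\lambda)^{\mu+1}}\sum_{m=0}^{\mu-1}\eul{\mu}{m}\lambda^{m+1}$. The same symmetry and reindexing rewrite this as $\frac{(-1)^{\mu+1}}{(\lambda-1)^{\mu+1}}\sum_{j=0}^\mu\eul{\mu}{\mu-j}\lambda^j$, which is precisely the second term. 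Adding the two pieces yields the asserted formula.

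As for rigor, the intermediate series converge only for $|\lambda|<1$, whereas the claim is for all $\lambda$ with $\lambda^{a_1}\neq1$; this is the one point that genuinely needs care. I would resolve it by analytic continuation: the left-hand side $s_\mu^{(\lambda)}(A)$ is a polynomial in $\lambda$ (the set of nonrepresentable integers is finite), while the right-hand side is a rational function of $\lambda$ holomorphic off the $a_1$-th roots of unity, and since the two agree on the open disc $|\lambda|<1$ minus those roots, they agree on the whole connected domain $\{\lambda:\lambda^{a_1}\neq1\}$. The remaining work—the Eulerian-symmetry bookkeeping and the sign reconciliations via $(\lambda^a-1)^{n+1}=(-1)^{n+1}(1-\lambda^a)^{n+1}$—is routine, so I expect the main obstacle to be organizing this continuation argument (equivalently, phrasing the whole computation as a single formal rational-function identity in $\lambda$) rather than any individual calculation.
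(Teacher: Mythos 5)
Your proof is correct, but note that this paper never proves Lemma \ref{lem-hh}: it is quoted from \cite{KZ}, so the only comparison available is with the methods the paper uses for the results it does prove. Your route --- splitting each residue class $i\pmod{a_1}$ into the full progression $i,i+a_1,i+2a_1,\dots$ minus the representable tail $m_i,m_i+a_1,m_i+2a_1,\dots$, evaluating both infinite series by the Eulerian generating function (\ref{eu:gf}) at $x=\lambda^{a_1}$ and at $x=\lambda$, and then removing the restriction $|\lambda|<1$ by noting that a polynomial and a rational function agreeing on an open set agree wherever both are defined --- is the derivation naturally suggested by the shape of the formula (Eulerian numerators over powers of $\lambda^{a_1}-1$), and is presumably that of the cited source. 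It is genuinely different from the technique used in this paper's own proofs: Theorem \ref{lem2} and Theorem \ref{lem-hh-a1} parametrize the gaps in class $i$ finitely, as $m_i-ja_1$ with $1\le j\le(m_i-i)/a_1$, which reduces everything to Bernoulli power-sum identities with no convergence or continuation issue; that finite method is in fact forced in Theorem \ref{lem-hh-a1}, since when $\lambda^{a_1}=1$ your geometric series in $\lambda^{a_1}$ is unusable --- exactly the dichotomy between Lemma \ref{lem-hh} and its counterpart. The two computational pivots of your argument check out: the identity $\frac{(-a_1)^n}{(\lambda^{a_1}-1)^{n+1}}\sum_{j=0}^n\eul{n}{n-j}\lambda^{ja_1}=-a_1^n\sum_{t\ge 0}t^n\lambda^{ta_1}$ follows from (\ref{eu:gf}) together with the symmetry $\eul{n}{m}=\eul{n}{n-1-m}$ and $\eul{n}{n}=0$ for $n\ge 1$ (the convention $\eul{0}{0}=1$ covering $n=0$), and including the class $i=0$ is harmless since $m_0=0$ makes its two series cancel.
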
 

\noindent 
{\it Remark.}  
If one wants to avoid $0^0=1$ (e.g., in computational calculations), we use the formula  
\begin{align*}  
s_\mu^{(\lambda)}(A)&=\sum_{n=0}^{\mu-1}\frac{(-a_1)^n}{(\lambda^{a_1}-1)^{n+1}}\binom{\mu}{n}\sum_{j=0}^n\eul{n}{n-j}\lambda^{j a_1}\sum_{i=0}^{a_1-1}m_i^{\mu-n}\lambda^{m_i}\\
&\quad +\frac{(-a_1)^\mu}{(\lambda^{a_1}-1)^{\mu+1}}\sum_{j=0}^\mu\eul{\mu}{\mu-j}\lambda^{j a_1}\sum_{i=0}^{a_1-1}\lambda^{m_i}\\ 
&\quad +\frac{(-1)^{\mu+1}}{(\lambda-1)^{\mu+1}}\sum_{j=0}^\mu\eul{\mu}{\mu-j}\lambda^j\,.
\end{align*} 
Note that for $k=2$, we have simpler formulas in terms of the Apostol-Bernoulli numbers \cite{KZ0}.

In conclusion, we have a formula for weighted sum of nonrepresentable numbers in arithmetic progression.

\begin{theorem}
Let $a,d,k$ be positive integers with $\gcd(a,d)=1$ and $2\le k\le a$. Let $q$ and $r$ be nonnegative integers with $a-1=q(k-1)+r$ and $0\le r<k-1$. 
Assume that $\lambda\ne 0$ and $\lambda^{a}\ne 1$ and $\lambda^d\ne 1$. Then for a positive integer $\mu$,  
\begin{align*}  
&s_\mu^{(\lambda)}(a,a+d,\dots,a+(k-1)d)\\
&=\sum_{n=0}^\mu\frac{(-a)^n}{(\lambda^{a}-1)^{n+1}}\binom{\mu}{n}\sum_{j=0}^n\eul{n}{n-j}\lambda^{j a}\\
&\qquad\times\sum_{i=0}^{\mu-n}\sts{\mu-n}{i}\lambda^i\biggl[\frac{d^i}{d x^i}\biggl(1+ 
\frac{x^{d}(x^{a_k}-x^{a})}{x^d-1}\frac{x^{q a_k}-1}{x^{a_k}-1}
+\frac{x^{q a_k+a+d}(x^{r d}-1)}{x^d-1} 
\biggr)\biggr]_{x=\lambda}\\
&\quad +\frac{(-1)^{\mu+1}}{(\lambda-1)^{\mu+1}}\sum_{j=0}^\mu\eul{\mu}{\mu-j}\lambda^j\,.
\end{align*}
\label{th:wpw-arith}
\end{theorem}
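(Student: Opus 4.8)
The plan is to derive the formula by specializing the general weighted-power-sum identity of Lemma~\ref{lem-hh} to the arithmetic progression $a_1=a,\ a_2=a+d,\ \dots,\ a_k=a+(k-1)d$, and then inserting the explicit evaluation of the inner residue sum already obtained in (\ref{eq:lammu}).

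First I would verify that the hypotheses of Lemma~\ref{lem-hh} hold: with $a_1=a$ the lemma needs only $\lambda\ne 0$ and $\lambda^{a}\ne 1$, both of which are assumed. Applying it to $A=\{a,a+d,\dots,a+(k-1)d\}$ yields an identity whose first term is a triple sum carrying the factor $(\lambda^{a}-1)^{-(n+1)}$, the Eulerian weights $\eul{n}{n-j}\lambda^{ja}$, and the binomial $\binom{\mu}{n}$, and whose only sequence-dependent ingredient is the inner sum $\sum_{i=0}^{a-1}m_i^{\mu-n}\lambda^{m_i}$; the second term, $\frac{(-1)^{\mu+1}}{(\lambda-1)^{\mu+1}}\sum_{j=0}^{\mu}\eul{\mu}{\mu-j}\lambda^{j}$, is already in final form and carries over verbatim.

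The substantive step is to replace that inner sum by the closed form (\ref{eq:lammu}), read with $\nu=\mu-n$. This is where the structure of the progression enters: (\ref{eq:lammu}) was obtained from the explicit minimal residue system of Selmer's Table~\ref{minres}, by using the Stirling-number operator identity (\ref{eq:432}) to rewrite each $m_i^{\nu}\lambda^{m_i}$ as a combination of derivatives of monomials evaluated at $x=\lambda$, summing the $q$ complete rows of $k-1$ entries as a geometric progression that collapses $\sum_{t=1}^{q}x^{(t-1)a_k}$ into $\frac{x^{qa_k}-1}{x^{a_k}-1}$, and finally adding the partial last row of $r$ entries. Substituting the resulting bracketed expression $\sum_{i=0}^{\mu-n}\sts{\mu-n}{i}\lambda^{i}\bigl[\frac{d^i}{dx^i}(\cdots)\bigr]_{x=\lambda}$, with $\nu=\mu-n$, into each summand of the specialized Lemma~\ref{lem-hh} produces the stated formula.

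The hypotheses match exactly what the two ingredients require: $\lambda^{a}\ne 1$ for Lemma~\ref{lem-hh}, and $\lambda^{d}\ne 1$ to keep the denominators $x^{d}-1$ nonzero at $x=\lambda$ in (\ref{eq:432}) and (\ref{eq:lammu}); the apparent denominator $x^{a_k}-1$ is harmless, since $\frac{x^{qa_k}-1}{x^{a_k}-1}=\sum_{t=0}^{q-1}x^{ta_k}$ is a polynomial with no singularity at $x=\lambda$. I expect no genuine analytic obstacle here: once (\ref{eq:lammu}) and Lemma~\ref{lem-hh} are in hand, the theorem is a direct substitution. The only thing requiring care is the index bookkeeping in the geometric collapse and in matching the running indices $n$, $j$, $i$ between the two formulas, so that the differentiation-and-evaluation operator sits correctly inside the outer sum over $n$.
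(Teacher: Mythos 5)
Your proposal is correct and follows essentially the same route as the paper: the paper first derives the closed form (\ref{eq:lammu}) for $\sum_{i=0}^{a-1}m_i^{\nu}\lambda^{m_i}$ from Selmer's residue system (Table \ref{minres}) via the Stirling-operator identity (\ref{eq:432}) and geometric collapsing, and then obtains Theorem \ref{th:wpw-arith} by substituting this, with $\nu=\mu-n$, into Lemma \ref{lem-hh}, exactly as you describe. Your observation that the apparent denominator $x^{a_k}-1$ is harmless because $(x^{qa_k}-1)/(x^{a_k}-1)$ is a polynomial is a correct piece of care that the paper leaves implicit.
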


\subsection{Examples}  

Consider the sequence $14, 17, 20, 23, 26, 29$.  Then, $a=14$, $d=3$, $k=6$, $q=2$ and $r=3$. By Theorem \ref{th:wpw-arith}, we have 
\begin{align*}
&s_2^{(\sqrt[3]{2})}(14,17,20,23,26,29)\\
&=21528522+31320173525\sqrt[3]{2}+659369214\sqrt[3]{4}\,,\\ 
&s_3^{(7)}(14,17,20,23,26,29)\\
&=126153136547718860397749189364814847897329040723302499959511892\,,\\
&s_4^{(-1/2)}(14,17,20,23,26,29)\\
&=-\frac{252455039549405466513}{147573952589676412928}\,,\\
&s_5^{(4+3\sqrt{-1})}(14,17,20,23,26,29)\\
&=58604955584641578954030966530484875253297329000101560480\\
&\quad -69984733631939902694215153740002368436325991046609895240\sqrt{-1}\,. 
\end{align*}
In fact, the weight power sum of nonrepresentable numbers is given by 
\begin{align*}  
&\lambda^1\cdot 1^\mu + \lambda^2\cdot 2^\mu + \lambda^3\cdot 3^\mu + \lambda^4\cdot 4^\mu + \lambda^5\cdot 5^\mu+\lambda^6\cdot 6^\mu + \lambda^7\cdot 7^\mu+ \lambda^8\cdot 8^\mu  \\ 
&+ \lambda^9\cdot 9^\mu + \lambda^{10}\cdot 10^\mu 
+\lambda^{11}\cdot 11^\mu + \lambda^{12}\cdot 12^\mu + \lambda^{13}\cdot 13^\mu + \lambda^{15}\cdot 15^\mu\\ 
&+\lambda^{16}\cdot 16^\mu + \lambda^{18}\cdot 18^\mu + \lambda^{19}\cdot 19^\mu + \lambda^{21}\cdot 21^\mu+\lambda^{22}\cdot 22^\mu + \lambda^{24}\cdot 24^\mu \\ 
&+ \lambda^{25}\cdot 25^\mu + \lambda^{27}\cdot 27^\mu 
+\lambda^{30}\cdot 30^\mu + \lambda^{32}\cdot 32^\mu + \lambda^{33}\cdot 33^\mu + \lambda^{35}\cdot 35^\mu\\ 
&+\lambda^{36}\cdot 36^\mu + \lambda^{38}\cdot 38^\mu + \lambda^{39}\cdot 39^\mu + \lambda^{41}\cdot 41^\mu+\lambda^{44}\cdot 44^\mu + \lambda^{47}\cdot 47^\mu \\ 
&+ \lambda^{50}\cdot 50^\mu + \lambda^{53}\cdot 53^\mu +\lambda^{61}\cdot 61^\mu + \lambda^{64}\cdot 64^\mu + \lambda^{67}\cdot 67^\mu\,. 
\end{align*}

\section{Weighted sums in exceptional cases}

Theorem \ref{th:wpw-arith} does not include the so-called alternate sums (e.g., see, \cite{wang08}). For example, we cannot calculate $s_1^{(-1)}(14,17,20,23,26,29)$ directly by Theorem \ref{th:wpw-arith}, because $\lambda^a=(-1)^{14}=1$. It should be done as $-1+2-3+4-5+6-7+8-9+10-11+12-13-15+16+\cdots-67$.  In this section, we study such exceptional cases. We need major modifications.

\subsection{The case $\lambda^d=1$} 

Since $\gcd(a,d)=1$, it is impossible for both $\lambda^a=1$ and $\lambda^d=1$ whenever $\lambda\ne 1$. First, assume that $\lambda^a\ne 1$ and $\lambda^d=1$. Then, the weighted sum of the first line of Table \ref{minres} is given by 
\begin{align*} 
&\lambda^a\bigl((a+d)^\nu+(a+2 d)^\nu+\cdots+(a+(k-1)d)^\nu\bigr)\\
&=\lambda^a\sum_{j=1}^{k-1}\sum_{\ell=0}^\nu\binom{\nu}{\ell}a^{\nu-\ell}(j d)^\ell\\ 
&=\lambda^a\sum_{\ell=0}^\nu\binom{\nu}{\ell}\frac{a^{\nu-\ell}d^\ell}{\ell+1}\sum_{j=0}^\ell\binom{\ell+1}{j}B_j(k^{\ell+1-j}-1)\,. 
\end{align*}
That of the second line is given by 
\begin{align*}  
&\lambda^{2 a}\bigl((2 a+k d)^\nu+(2 a+(k+1)d)^\nu+\cdots+(a+2(k-1)d)^\nu\bigr)\\ 
&=\lambda^{2 a}\sum_{\ell=0}^\nu\binom{\nu}{\ell}\frac{(2 a)^{\nu-\ell}d^\ell}{\ell+1}\sum_{j=0}^\ell\binom{\ell+1}{j}B_j\bigl((2 k-1)^{\ell+1-j}-k^{\ell+1-j}\bigr)\,.
\end{align*}
Similarly, those of the third and eventually the $q$-th line are given by 
$$ 
\lambda^{3 a}\sum_{\ell=0}^\nu\binom{\nu}{\ell}\frac{(3 a)^{\nu-\ell}d^\ell}{\ell+1}\sum_{j=0}^\ell\binom{\ell+1}{j}B_j\bigl((3 k-2)^{\ell+1-j}-(2 k-1)^{\ell+1-j}\bigr)
$$ 
and 
\begin{align*}  
&\lambda^{q a}\sum_{\ell=0}^\nu\binom{\nu}{\ell}\frac{(q a)^{\nu-\ell}d^\ell}{\ell+1}\sum_{j=0}^\ell\binom{\ell+1}{j}B_j\\
&\qquad\times\bigl((q(k-1)+1)^{\ell+1-j}-((q-1)(k-1)+1)^{\ell+1-j}\bigr)\,,\\
\end{align*}
respectively. Finally, the weighted sum of the ($q+1$)-st line is given by 
\begin{align*}  
&\lambda^{(q+1)a}\sum_{\ell=0}^\nu\binom{\nu}{\ell}\frac{((q+1)a)^{\nu-\ell}d^\ell}{\ell+1}\sum_{j=0}^\ell\binom{\ell+1}{j}B_j\\
&\qquad\times\bigl((q(k-1)+r+1)^{\ell+1-j}-(q(k-1)+1)^{\ell+1-j}\bigr)\,. 
\end{align*} 
Thus, 
\begin{align*}  
&\sum_{i=0}^{a-1}\lambda^{m_i}m_i^\nu\\
&=-\sum_{\ell=0}^\nu\binom{\nu}{\ell}\frac{\lambda^a a^{\nu-\ell}d^\ell}{\ell+1}\sum_{j=0}^\ell\binom{\ell+1}{j}B_j\\
&\quad -\sum_{\ell=0}^\nu\binom{\nu}{\ell}\frac{\bigl(\lambda^{2 a}(2 a)^{\nu-\ell}-\lambda^a a^{\nu-\ell}\bigr)d^\ell}{\ell+1}\sum_{j=0}^\ell\binom{\ell+1}{j}B_j k^{\ell+1-j}\\ 
&\quad -\sum_{\ell=0}^\nu\binom{\nu}{\ell}\frac{\bigl(\lambda^{3 a}(3 a)^{\nu-\ell}-\lambda^{2 a}(2 a)^{\nu-\ell}\bigr)d^\ell}{\ell+1}\sum_{j=0}^\ell\binom{\ell+1}{j}B_j(2 k-1)^{\ell+1-j}\\ 
&\quad -\cdots\\ 
&\quad -\sum_{\ell=0}^\nu\binom{\nu}{\ell}\frac{\bigl(\lambda^{(q+1)a}((q+1)a)^{\nu-\ell}-\lambda^{q a}(q a)^{\nu-\ell}\bigr)d^\ell}{\ell+1}\\
&\qquad\times\sum_{j=0}^\ell\binom{\ell+1}{j}B_j\bigl(q(k-1)+1\bigr)^{\ell+1-j}\\ 
&\quad +\sum_{\ell=0}^\nu\binom{\nu}{\ell}\frac{\lambda^{(q+1)a}((q+1)a)^{\nu-\ell}d^\ell}{\ell+1}\sum_{j=0}^\ell\binom{\ell+1}{j}B_j\bigl(q(k-1)+r+1\bigr)^{\ell+1-j}\\ 
&=\sum_{\ell=0}^\nu\binom{\nu}{\ell}\frac{d^\ell}{\ell+1}\sum_{j=0}^\ell\binom{\ell+1}{j}B_j\biggl(-\lambda^a a^{\nu-\ell}\\
&\quad -\sum_{s=1}^q\bigl(\lambda^{(s+1)a}((s+1)a)^{\nu-\ell}-\lambda^{s a}(s a)^{\nu-\ell}\bigr)\bigl(s(k-1)+1\bigr)^{\ell+1-j}\\
&\quad +\lambda^{(q+1)a}\bigl((q+1)a\bigr)^{\nu-\ell}\bigl(q(k-1)+r+1\bigr)^{\ell+1-j}  
\biggr)\,. 
\end{align*}

By applying Lemma \ref{lem-hh}, we have an explicit expression of the weighted sum of nonrepresentable numbers for $\lambda^d=1$.

\begin{theorem}
Let $a,d,k$ be positive integers with $\gcd(a,d)=1$ and $2\le k\le a$. Let $q$ and $r$ be nonnegative integers with $a-1=q(k-1)+r$ and $0\le r<k-1$. 
Assume that $\lambda\ne 0$ and $\lambda^{a}\ne 1$ and $\lambda^d=1$. Then for a positive integer $\mu$,  
\begin{align*}  
&s_\mu^{(\lambda)}(a,a+d,\dots,a+(k-1)d)\\
&=\sum_{n=0}^\mu\frac{(-a)^n}{(\lambda^{a}-1)^{n+1}}\binom{\mu}{n}\sum_{j=0}^n\eul{n}{n-j}\lambda^{j a}\\
&\qquad\times\sum_{\ell=0}^{\mu-n}\binom{\mu-n}{\ell}\frac{d^\ell}{\ell+1}\sum_{j=0}^\ell\binom{\ell+1}{j}B_j\biggl(-\lambda^a a^{\mu-n-\ell}\\
&\quad -\sum_{s=1}^q\bigl(\lambda^{(s+1)a}((s+1)a)^{\mu-n-\ell}-\lambda^{s a}(s a)^{\mu-n-\ell}\bigr)\bigl(s(k-1)+1\bigr)^{\ell+1-j}\\
&\quad +\lambda^{(q+1)a}\bigl((q+1)a\bigr)^{\mu-n-\ell}\bigl(q(k-1)+r+1\bigr)^{\ell+1-j}  
\biggr)\\
&\quad +\frac{(-1)^{\mu+1}}{(\lambda-1)^{\mu+1}}\sum_{j=0}^\mu\eul{\mu}{\mu-j}\lambda^j\,.
\end{align*}
\label{th:wpw-arith-d1}
\end{theorem}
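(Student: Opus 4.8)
The plan is to obtain Theorem \ref{th:wpw-arith-d1} by specializing the general weighted power sum formula of Lemma \ref{lem-hh} to $a_1=a$ and inserting the explicit value of the inner weighted sum $\sum_{i=0}^{a-1}m_i^{\mu-n}\lambda^{m_i}$ that has just been computed for an arithmetic progression under the hypothesis $\lambda^d=1$. Indeed, Lemma \ref{lem-hh} already isolates the only sequence-dependent ingredient: it writes $s_\mu^{(\lambda)}(A)$ as an $n$-indexed sum in which the factor $\sum_{i=0}^{a_1-1}m_i^{\mu-n}\lambda^{m_i}$ carries all the information about the $m_i$, together with a tail $\frac{(-1)^{\mu+1}}{(\lambda-1)^{\mu+1}}\sum_{j=0}^\mu\eul{\mu}{\mu-j}\lambda^j$ that does not depend on the sequence at all. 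Note also that the hypotheses are compatible and the lemma is applicable: since $\gcd(a,d)=1$, the relation $\lambda^d=1$ with $\lambda\ne 1$ forces $\lambda^a\ne 1$. Thus, once the inner sum is known in closed form, the theorem follows by substitution.

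First I would explain why the evaluation (\ref{eq:lammu}) underlying Theorem \ref{th:wpw-arith} is unusable here. Each geometric factor such as $\frac{x^{(k-1)d}-1}{x^d-1}$ acquires a pole at $x=\lambda$ as soon as $\lambda^d=1$, so the rational closed form breaks down and a separate argument is forced. The substitute is to use the relation $\lambda^d=1$ directly on the minimal residue system displayed in Table \ref{minres}: in the $s$-th row every entry has the form $s a+(\text{integer})d$, so its weight collapses to $\lambda^{s a+(\cdots)d}=\lambda^{s a}$, a single constant for the whole row. Each row therefore contributes $\lambda^{s a}$ times an ordinary power sum over a consecutive block of integers, and each such block is evaluated by the Bernoulli-number form of Faulhaber's identity, $\sum_{j=1}^{\ell}j^{n}=\frac{1}{n+1}\sum_{\kappa=0}^{n}\binom{n+1}{\kappa}B_\kappa(\ell+1)^{n+1-\kappa}$, exactly as in the proof of Theorem \ref{lem2}. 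Summing rows $s=1,\dots,q$ and adjoining the truncated $(q+1)$-st row of length $r$ produces precisely the closed form for $\sum_{i=0}^{a-1}\lambda^{m_i}m_i^\nu$ recorded immediately before the statement.

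I expect the main difficulty to be the combinatorial bookkeeping of this row-by-row evaluation rather than any conceptual step. The boundaries are the delicate part: the first row starts at $a+d$ (not at $a$), the interior rows combine through the telescoping differences $\lambda^{(s+1)a}((s+1)a)^{\nu-\ell}-\lambda^{s a}(s a)^{\nu-\ell}$ weighted by the block endpoints $\bigl(s(k-1)+1\bigr)^{\ell+1-j}$, and the last row carries only $r$ terms ending at $q(k-1)+r+1$. Tracking the upper and lower limits of every block so that the partial sums collapse to the compact expression with a single leading term $-\lambda^a a^{\nu-\ell}$, the sum over $s$, and the terminal $(q+1)$-st contribution is where all the care lies. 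After that, the proof is mechanical: set $\nu=\mu-n$ in this inner sum, substitute into the $n$-sum of Lemma \ref{lem-hh} with $a_1=a$, leave the $\lambda$-only tail unchanged, and collect terms to reach the stated formula.
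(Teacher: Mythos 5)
Your proposal is correct and takes essentially the same route as the paper: the paper likewise uses $\lambda^d=1$ to collapse each row of Table \ref{minres} to a constant weight $\lambda^{sa}$ times an ordinary power sum evaluated by the Bernoulli-number (Faulhaber) formula, telescopes the rows into the compact closed form for $\sum_{i=0}^{a-1}\lambda^{m_i}m_i^\nu$, and then substitutes this with $\nu=\mu-n$ into Lemma \ref{lem-hh} with $a_1=a$. Your two preliminary observations---that the geometric factors in (\ref{eq:lammu}) acquire a pole at $x=\lambda$ when $\lambda^d=1$, forcing a separate argument, and that $\gcd(a,d)=1$ makes $\lambda^a\ne1$ automatic once $\lambda\ne1$ and $\lambda^d=1$---also mirror the paper's own remarks.
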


\subsection{The case $\lambda^a=1$} 

Next, assume that $\lambda^a=1$ and $\lambda^d\ne 1$. 
In this case, we cannot use Lemma \ref{lem-hh}. We need another formula.  

For $\ell_i=(m_i-i)/a$, 
\begin{align*}  
&\sum_{i=1}^{a-1}\sum_{j=1}^{\ell_i}\lambda^{m_i-j a}(m_i-j a)^\mu\\
&=\sum_{i=1}^{a-1}\sum_{j=1}^{\ell_i}\lambda^{m_i}\sum_{n=0}^\mu\binom{\mu}{n}m_i^{\mu-n}j^n(-a)^n\\ 
&=\sum_{i=1}^{a-1}\lambda^{m_i}\sum_{n=0}^\mu\binom{\mu}{n}m_i^{\mu-n}(-a)^n\sum_{\kappa=0}^n\binom{n}{\kappa}\frac{(-1)^\kappa B_\kappa}{n+1-\kappa}\left(\frac{m_i-i}{a}\right)^{n+1-\kappa}\\ 
&=\sum_{i=1}^{a-1}\lambda^{m_i}\sum_{n=0}^\mu\binom{\mu}{n}m_i^{\mu-n}(-a)^n\sum_{\kappa=0}^n\binom{n}{\kappa}\frac{(-1)^\kappa B_\kappa}{n+1-\kappa}\\
&\qquad\times \sum_{j=0}^{n+1-\kappa}\binom{n+1-\kappa}{j}\frac{m_i^{n+1-\kappa-j}(-i)^j}{a^{n+1-\kappa}}\\ 
&=\sum_{\kappa=0}^\mu B_\kappa a^{\kappa-1}\sum_{n=\kappa}^\mu\binom{\mu}{n}\binom{n}{\kappa}\frac{(-1)^{n-\kappa}}{n+1-\kappa}\\
&\qquad\times \sum_{j=0}^{n+1-\kappa}(-1)^j\binom{n+1-\kappa}{j}\sum_{i=1}^{a-1}m_i^{\mu+1-\kappa-j}\lambda^{m_i}i^j\,. 
\end{align*}
For $j=0$, we have 
\begin{align*}  
\sum_{n=\kappa}^\mu\binom{\mu}{n}\binom{n}{\kappa}\frac{(-1)^{n-\kappa}}{n+1-\kappa}
&=\frac{1}{\mu+1}\binom{\mu+1}{\kappa}\sum_{n=\kappa}^\mu\binom{\mu+1-\kappa}{n+1-\kappa}(-1)^{n-\kappa}\\ 
&=\frac{1}{\mu+1}\binom{\mu+1}{\kappa}\left(1-\sum_{m=0}^{\mu+1-\kappa}\binom{\mu+1-\kappa}{m}(-1)^m\right)\\
&=\frac{1}{\mu+1}\binom{\mu+1}{\kappa}\,. 
\end{align*}
For $1\le j\le n+1-\kappa\le\mu+1-\kappa$, by 
$$  
\sum_{n=\kappa}^\mu(-1)^{n-\kappa}\binom{\mu+1-\kappa}{n+1-\kappa}\binom{n+1-\kappa}{j}=\begin{cases}
(-1)^{\mu-\kappa}&\text{if $j=\mu-\kappa+1$ (and $n=\mu$)};\\ 
0&\text{otherwise}\,, 
\end{cases}
$$  
we have 
\begin{align*}
&\sum_{n=\kappa}^\mu\binom{\mu}{n}\binom{n}{\kappa}\frac{(-1)^{n-\kappa}}{n+1-\kappa}\sum_{j=1}^{n+1-\kappa}(-1)^j\binom{n+1-\kappa}{j}m_i^{\mu+1-\kappa-j}i^j\\ 
&=\frac{1}{\mu+1}\binom{\mu+1}{\kappa}\sum_{n=\kappa}^\mu\binom{\mu+1-\kappa}{n+1-\kappa}(-1)^{n-\kappa}\sum_{j=1}^{n+1-\kappa}(-1)^j\binom{n+1-\kappa}{j}m_i^{\mu+1-\kappa-j}i^j\\
&=\frac{-1}{\mu+1}\binom{\mu+1}{\kappa}i^{\mu-\kappa+1}\,. 
\end{align*} 
Thus, 
\begin{align*}  
&s_\mu^{(\lambda)}(a,a+d,\dots,a+(k-1)d)
=\sum_{i=0}^{a-1}\sum_{j=1}^{\ell_i}\lambda^{m_i-j a}(m_i-j a)^\mu\\
&=\frac{1}{\mu+1}\sum_{\kappa=0}^\mu\binom{\mu+1}{\kappa}B_\kappa a^{\kappa-1}\sum_{i=0}^{a-1}(m_i^{\mu+1-\kappa}-i^{\mu+1-\kappa})\lambda^{m_i}\,. 
\end{align*}
Finally, since $\lambda^{m_i}=\lambda^{a\ell_i+i}=\lambda^i$ and (\ref{eu:gf}), we get 
\begin{align*}
&-\frac{1}{\mu+1}\sum_{\kappa=0}^\mu\binom{\mu+1}{\kappa}B_\kappa a^{\kappa-1}\sum_{i=0}^{a-1}i^{\mu+1-\kappa}\lambda^{m_i}\\
&=-\frac{1}{\mu+1}\sum_{\kappa=0}^\mu\binom{\mu+1}{\kappa}B_\kappa a^{\kappa-1}\sum_{i=0}^{a-1}i^{\mu+1-\kappa}\lambda^{i}\\
&=-\frac{1}{\mu+1}\sum_{i=1}^{a-1}i^\mu\lambda^i\sum_{\kappa=0}^\mu\binom{\mu+1}{\kappa}B_\kappa\left(\frac{a}{i}\right)\\
&=-\sum_{i=0}^{a-1}\sum_{j=1}^\infty\lambda^{i-j a}(i-j a)^\mu\\ 
&=-\sum_{k=1}^\infty(-k)^\mu\lambda^{-k}\\
&=\frac{(-1)^{\mu+1}}{(\lambda-1)^{\mu+1}}\sum_{j=0}^{\mu-1}\eul{\mu}{j}\lambda^{j+1}\,. 
\end{align*}

\begin{theorem} 
Assume that $\lambda\ne 0$ and $\lambda^{a}=1$. Then for a positive integer $\mu$,  
\begin{align}  
s_\mu^{(\lambda)}(a,a+d,\dots,a+(k-1)d)&=\frac{1}{\mu+1}\sum_{n=0}^\mu\binom{\mu+1}{n}B_n a^{n-1}\sum_{i=1}^{a-1}(m_i^{\mu+1-n}-i^{\mu+1-n})\lambda^{m_i}
\label{lem-hh-a1-1}\\
&=\frac{1}{\mu+1}\sum_{n=0}^\mu\binom{\mu+1}{n}B_n a^{n-1}\sum_{i=1}^{a-1}m_i^{\mu+1-n}\lambda^{m_i}\notag\\
&\quad +\frac{(-1)^{\mu+1}}{(\lambda-1)^{\mu+1}}\sum_{j=0}^{\mu-1}\eul{\mu}{j}\lambda^{j+1}\,. 
\label{lem-hh-a1-2}
\end{align}
\label{lem-hh-a1} 
\end{theorem}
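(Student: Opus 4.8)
The plan is to exploit the hypothesis $\lambda^{a}=1$ from the outset, since it makes the weight constant on each residue class modulo $a$. Every nonrepresentable integer congruent to $i\pmod a$ has the form $m_i-ja$ with $1\le j\le\ell_i$, where $\ell_i=(m_i-i)/a$, and because $\lambda^{a}=1$ we have $\lambda^{m_i-ja}=\lambda^{m_i}=\lambda^{i}$. First I would partition the nonrepresentable set by residue and write
\[
s_\mu^{(\lambda)}(a,a+d,\dots,a+(k-1)d)=\sum_{i=1}^{a-1}\sum_{j=1}^{\ell_i}\lambda^{m_i-ja}(m_i-ja)^\mu ,
\]
then expand $(m_i-ja)^\mu$ by the binomial theorem and evaluate the inner sum over $j$ by Faulhaber's formula, i.e.\ the Bernoulli-number power-sum identity already used in the proof of Theorem~\ref{lem2}. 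This is exactly the same opening move as for Theorem~\ref{lem2}; the only new feature is the inert factor $\lambda^{m_i}$ riding along, which is unaffected by the $j$-summation.

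Next I would perform the combinatorial collapse. After the Faulhaber step and a second binomial expansion of $\bigl((m_i-i)/a\bigr)^{n+1-\kappa}$, the expression becomes a triple sum indexed by the Bernoulli index $\kappa$, by $n$, and by an auxiliary index $j$. The $j=0$ contribution is governed by $\sum_{n=\kappa}^{\mu}\binom{\mu}{n}\binom{n}{\kappa}\frac{(-1)^{n-\kappa}}{n+1-\kappa}=\frac{1}{\mu+1}\binom{\mu+1}{\kappa}$, which produces the $m_i^{\mu+1-\kappa}$ terms. For $1\le j\le n+1-\kappa$ the decisive tool is the alternating identity
\[
\sum_{n=\kappa}^{\mu}(-1)^{n-\kappa}\binom{\mu+1-\kappa}{n+1-\kappa}\binom{n+1-\kappa}{j}=\begin{cases}(-1)^{\mu-\kappa}&\text{if }j=\mu-\kappa+1,\\[2pt]0&\text{otherwise,}\end{cases}
\]
which kills every intermediate $j$ and leaves only $j=\mu-\kappa+1$, contributing $-\frac{1}{\mu+1}\binom{\mu+1}{\kappa}i^{\mu+1-\kappa}$. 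Assembling the two surviving pieces yields formula (\ref{lem-hh-a1-1}).

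For the second equality I would split the sum in (\ref{lem-hh-a1-1}) into its $m_i^{\mu+1-\kappa}$ part, which is already the first displayed term of (\ref{lem-hh-a1-2}), and its $-i^{\mu+1-\kappa}$ part. On the latter I would use $\lambda^{m_i}=\lambda^{i}$ and recognise $-\frac{1}{\mu+1}\sum_{\kappa}\binom{\mu+1}{\kappa}B_\kappa a^{\kappa-1}\sum_i i^{\mu+1-\kappa}\lambda^{i}$ as a Faulhaber/Bernoulli-polynomial remainder, rewriting it as the reindexed series $-\sum_{i=0}^{a-1}\sum_{j\ge1}\lambda^{i-ja}(i-ja)^\mu$. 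Under the substitution $k=ja-i$ the pairs $(i,j)$ enumerate every positive integer exactly once, with $\lambda^{i-ja}=\lambda^{-k}$ and $(i-ja)^\mu=(-k)^\mu$, so this collapses to $-\sum_{k\ge1}(-k)^\mu\lambda^{-k}=(-1)^{\mu+1}\sum_{k\ge1}k^\mu(\lambda^{-1})^k$. Finally the Eulerian generating function (\ref{eu:gf}) evaluated at $x=\lambda^{-1}$, together with the symmetry $\eul{\mu}{m}=\eul{\mu}{\mu-1-m}$ and the simplification $1-\lambda^{-1}=(\lambda-1)/\lambda$, turns this into $\frac{(-1)^{\mu+1}}{(\lambda-1)^{\mu+1}}\sum_{j=0}^{\mu-1}\eul{\mu}{j}\lambda^{j+1}$, establishing (\ref{lem-hh-a1-2}).

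The hard part will be twofold. First, I expect the combinatorial bookkeeping in the collapse to be the main obstacle: because $\lambda^{a}=1$ destroys the geometric-decay factor $(\lambda^{a}-1)^{-(n+1)}$ that drove Lemma~\ref{lem-hh}, I cannot invoke that lemma and must instead track every cancellation by hand through the alternating identity above, which is what forces the answer into the Bernoulli-sum shape rather than the Eulerian-ratio shape. Second, the passage through $\sum_{k\ge1}k^\mu(\lambda^{-1})^k$ is only literally convergent for $|\lambda|>1$, whereas $\lambda$ is an $a$-th root of unity with $|\lambda|=1$; I would therefore interpret (\ref{eu:gf}) as an identity of rational functions in $\lambda$ valid away from $\lambda=1$, so that the reindexing step is a legitimate manipulation of rational expressions and the final closed form is well defined despite the divergence of the underlying series.
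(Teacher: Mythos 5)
Your proposal follows essentially the same route as the paper's proof: the same residue-class decomposition with the weight $\lambda^{m_i-ja}=\lambda^{m_i}$ riding inert through the binomial/Faulhaber expansion, the same two collapse identities (the $j=0$ evaluation $\frac{1}{\mu+1}\binom{\mu+1}{\kappa}$ and the alternating identity isolating $j=\mu-\kappa+1$) yielding (\ref{lem-hh-a1-1}), and the same reindexing of the $-i^{\mu+1-\kappa}\lambda^i$ remainder into $-\sum_{k\ge 1}(-k)^\mu\lambda^{-k}$ evaluated via the Eulerian generating function to reach (\ref{lem-hh-a1-2}). The only difference is that you explicitly flag and regularize the divergent-series step (which is genuinely needed, since $|\lambda|=1$), whereas the paper performs that manipulation silently.
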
 

Theorem \ref{lem-hh-a1} is the counterpart version of Lemma \ref{lem-hh}.  
The formula (\ref{lem-hh-a1-1}) looks nicer, but as the correspondence between $m_i$ and $i$ is not so simple, the calculation of $i^{\mu+1-n}\lambda^{m_i}$ is not straightforward. See examples below.  
\bigskip

The formula (\ref{lem-hh-a1-2}) needs an expression of weighted sum of representable numbers in the minimal residue system.  
Since 
\begin{align*} 
D_j&=D_{j,\ell,d,\lambda}:=\lambda^{j d}j^\ell\\
&=\left[\sum_{h=0}^\ell\sts{\ell}{h}x^h\frac{d^h}{d x^h}x^j\right]_{x=\lambda^d}
\end{align*}  
by (\ref{eq:432}), 
the weighted sum of the first line of Table \ref{minres} is given by 
\begin{align*} 
&\lambda^d(a+d)^\nu+\lambda^{2 d}(a+2 d)^\nu+\cdots+\lambda^{(k-1)d}(a+(k-1)d)^\nu\\
&=\sum_{\ell=0}^\nu\binom{\nu}{\ell}a^{\nu-\ell}d^\ell\sum_{j=1}^{k-1}(\lambda^d)^j j^\ell\\ 
&=\sum_{\ell=0}^\nu\binom{\nu}{\ell}a^{\nu-\ell}d^\ell\sum_{j=1}^{k-1}D_j\,. 
\end{align*}
That of the second line is given by 
\begin{align*} 
&\lambda^{k d}(2 a+k d)^\nu+\lambda^{(k+1)d}(2 a+(k+1)d)^\nu+\cdots+\lambda^{2(k-1)d}(2 a+2(k-1)d)^\nu\\
&=\sum_{\ell=0}^\nu\binom{\nu}{\ell}(2 a)^{\nu-\ell}d^\ell\sum_{j=k}^{2(k-1)}D_j\,. 
\end{align*}
Similarly, those of the third line and eventually the $q$-th line are given by 
$$ 
\sum_{\ell=0}^\nu\binom{\nu}{\ell}(3 a)^{\nu-\ell}d^\ell\sum_{j=2 k-1}^{3(k-1)}D_j
$$
and 
$$ 
\sum_{\ell=0}^\nu\binom{\nu}{\ell}(q a)^{\nu-\ell}d^\ell\sum_{j=(q-1)(k-1)+1}^{q(k-1)}D_j\,,
$$ 
respectively.  Finally, that of the ($q+1$)-st line is given by 
$$
\sum_{\ell=0}^\nu\binom{\nu}{\ell}\bigl((q+1)a\bigr)^{\nu-\ell}d^\ell\sum_{j=q(k-1)+1}^{q(k-1)+r}D_j\,. 
$$ 
Thus, we have 
\begin{align*}  
&\sum_{i=1}^{a-1}\lambda^{m_i}m_i^\nu=\sum_{\ell=0}^\nu\binom{\nu}{\ell}d^\ell\\
&\qquad\times\biggl(\sum_{s=1}^q(s a)^{\nu-\ell}\sum_{j=(s-1)(k-1)+1}^{s(k-1)}D_j+\bigl((q+1)a\bigr)^{\nu-\ell}\sum_{j=q(k-1)+1}^{q(k-1)+r}D_j
\biggr)\,. 
\end{align*}

By applying the formula (\ref{lem-hh-a1-2}) in Theorem \ref{lem-hh-a1}, we have an explicit expression of the weighted sum of nonrepresentable numbers for $\lambda^a=1$.  

\begin{theorem} 
Let $a,d,k$ be positive integers with $\gcd(a,d)=1$ and $2\le k\le a$. Let $q$ and $r$ be nonnegative integers with $a-1=q(k-1)+r$ and $0\le r<k-1$. 
Assume that $\lambda\ne 0$ and $\lambda^{a}=1$ and $\lambda^d\ne 1$. Then for a positive integer $\mu$,  
\begin{align*}  
&s_\mu^{(\lambda)}(a,a+d,\dots,a+(k-1)d)\\
&=\frac{1}{\mu+1}\sum_{n=0}^\mu\binom{\mu+1}{n}B_n a^{n-1}\sum_{\ell=0}^{\mu+1-n}\binom{\mu+1-n}{\ell}d^\ell\\
&\qquad\times\biggl(\sum_{s=1}^q(s a)^{\mu+1-n-\ell}\sum_{j=(s-1)(k-1)+1}^{s(k-1)}D_j+\bigl((q+1)a\bigr)^{\mu+1-n-\ell}\sum_{j=q(k-1)+1}^{q(k-1)+r}D_j
\biggr)\\
&\quad +\frac{(-1)^{\mu+1}}{(\lambda-1)^{\mu+1}}\sum_{j=0}^{\mu-1}\eul{\mu}{j}\lambda^{j+1}\,, 
\end{align*}
where 
$$
D_j=\lambda^{j d}j^\ell
=\left[\sum_{h=0}^\ell\sts{\ell}{h}x^h\frac{d^h}{d x^h}x^j\right]_{x=\lambda^d}\,.
$$ 
\label{th:wpw-arith-a1}
\end{theorem}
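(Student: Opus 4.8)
The plan is to combine the reduction formula (\ref{lem-hh-a1-2}) of Theorem \ref{lem-hh-a1}, which is valid precisely when $\lambda^a=1$, with an explicit evaluation of the weighted power sum of the representable residues $\sum_{i=1}^{a-1}\lambda^{m_i}m_i^{\nu}$ for the arithmetic progression $a,a+d,\dots,a+(k-1)d$. Theorem \ref{lem-hh-a1} already carries out all of the delicate analytic work, namely the Bernoulli recurrences and the extraction of the Eulerian correction term, so what remains is essentially the substitution $\nu=\mu+1-n$ into the finite sum $\sum_{i=1}^{a-1}m_i^{\mu+1-n}\lambda^{m_i}$ appearing in (\ref{lem-hh-a1-2}), followed by summation over $n$ from $0$ to $\mu$. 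The ranges cause no trouble: as $n$ runs over $0,\dots,\mu$ the exponent $\nu=\mu+1-n$ runs over $1,\dots,\mu+1$, all of which are $\ge 1$, so the closed form for $\sum_{i=1}^{a-1}\lambda^{m_i}m_i^{\nu}$ applies throughout and the omitted term $m_0=0$ contributes nothing.

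First I would evaluate $\sum_{i=1}^{a-1}\lambda^{m_i}m_i^{\nu}$ by reading off the minimal residue system from Table \ref{minres}. Every entry in the $s$-th row has the shape $m=s a+j d$, where $j$ runs through the consecutive block $(s-1)(k-1)+1,\dots,s(k-1)$ for $1\le s\le q$, and through $q(k-1)+1,\dots,q(k-1)+r$ for the final $(q+1)$-st row. The crucial simplification is the hypothesis $\lambda^a=1$: it forces $\lambda^{m}=\lambda^{s a+j d}=\lambda^{j d}$, so the weight depends only on the $d$-part of $m$. Expanding $(s a+j d)^{\nu}=\sum_{\ell=0}^{\nu}\binom{\nu}{\ell}(s a)^{\nu-\ell}d^{\ell}j^{\ell}$ and collecting the weight with the $j^{\ell}$ factor produces exactly the quantities $D_j=\lambda^{j d}j^{\ell}$. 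Summing each row over its block of $j$'s and then over all rows yields the displayed expression for $\sum_{i=1}^{a-1}\lambda^{m_i}m_i^{\nu}$ in terms of $\sum_j D_j$.

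Next I would record the derivative representation $D_j=\bigl[\sum_{h=0}^{\ell}\sts{\ell}{h}x^{h}\tfrac{d^{h}}{d x^{h}}x^{j}\bigr]_{x=\lambda^{d}}$, which is just identity (\ref{eq:432}) with $x=\lambda^d$ and the roles of $a,\nu$ played by $j,\ell$; this holds term by term with no division or convergence issue, and each block sum $\sum_j D_j$ remains a finite sum. Substituting $\nu=\mu+1-n$ into the row-sum formula and inserting the result into (\ref{lem-hh-a1-2}) gives the stated theorem verbatim, with the Eulerian tail $\frac{(-1)^{\mu+1}}{(\lambda-1)^{\mu+1}}\sum_{j=0}^{\mu-1}\eul{\mu}{j}\lambda^{j+1}$ carried over unchanged. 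The hypothesis $\lambda^d\ne 1$, which together with $\lambda^a=1$ and $\gcd(a,d)=1$ is equivalent to $\lambda\ne 1$, is exactly what keeps the denominator $(\lambda-1)^{\mu+1}$ nonzero and separates the present case from the purely unweighted one governed by Theorem \ref{lem2}.

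There is no genuine analytic obstacle here, since (\ref{lem-hh-a1-2}) supplies the hard part. The only step demanding care is the combinatorial bookkeeping of the residue-system rows: correctly identifying the block of indices $j$ attached to each multiplier $s a$, treating the truncated final row of length $r$ separately, and applying the weight collapse $\lambda^{m}=\lambda^{j d}$ consistently so that the identity $\lambda^{m_i}=\lambda^{i}$ used in deriving (\ref{lem-hh-a1-2}) agrees with $\lambda^{j d}$ via $i\equiv j d\pmod a$. Once these index ranges are pinned down, assembling the triple sum over $n$, $\ell$, and the rows is routine.
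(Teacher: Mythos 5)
Your proposal is correct and follows essentially the same route as the paper: invoke formula (\ref{lem-hh-a1-2}) of Theorem \ref{lem-hh-a1}, compute $\sum_{i=1}^{a-1}\lambda^{m_i}m_i^{\nu}$ row by row from Table \ref{minres} using the weight collapse $\lambda^{sa+jd}=\lambda^{jd}$ and the binomial expansion to produce the $D_j$, and substitute $\nu=\mu+1-n$. Your added remarks (that $\nu\ge 1$ throughout, and that $\lambda^d\ne1$ is equivalent to $\lambda\ne1$ given $\lambda^a=1$ and $\gcd(a,d)=1$) are accurate and consistent with the paper's argument.
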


\subsection{Examples}   

Consider the sequence $12,17,22,27,32,37,42$. So, $a=12$, $d=5$, $k=7$, $q=1$ and $r=5$. If 
$$
\lambda=\zeta_5=e^{2\pi\sqrt{-1}/5}=\frac{-1+\sqrt{5}}{4}+\sqrt{-\frac{5+\sqrt{5}}{8}}
$$  
so that $\lambda^5=1$, 
then by applying Theorem \ref{th:wpw-arith-d1} we have 
\begin{align*}  
s_1^{(\zeta_5)}&=\frac{1}{8}\Bigl(1322-686\sqrt{5}-45\sqrt{-2(5+\sqrt{5})}+87\sqrt{-10(5+\sqrt{5})}\Bigr)\,,\\ 
s_2^{(\zeta_5)}&=\frac{1}{8}\Bigl(49290-39326\sqrt{5}-6611\sqrt{-2(5+\sqrt{5})}+10287\sqrt{-10(5+\sqrt{5})}\Bigr)\,,\\ 
s_3^{(\zeta_5)}&=\frac{1}{8}\Bigl(1846526-2218802\sqrt{5}-652311\sqrt{-2(5+\sqrt{5})}\\
&\quad +756063\sqrt{-10(5+\sqrt{5})}\Bigr)\,,\\ 
s_4^{(\zeta_5)}&=\frac{1}{8}\Bigl(65407506-127134470\sqrt{5}-46993307\sqrt{-2(5+\sqrt{5})}\\
&\quad +49955103\sqrt{-10(5+\sqrt{5})}\Bigr)\,,\\ 
s_5^{(\zeta_5)}&=\frac{1}{8}\Bigl(1911457022-7439898866\sqrt{5}-3100720215\sqrt{-2(5+\sqrt{5})}\\
&\quad +3186196287\sqrt{-10(5+\sqrt{5})}\Bigr)\,. 
\end{align*}

Re-consider the sequence $14,17,20,23,26,29$. So, $a=14$. For $i=1,2,3,\dots,13$, we see that the minimum residue system is 
\begin{align*}
&17&\,&20&\,&23&\,&26&\,&29\\
&46&\,&49&\,&52&\,&55&\,&58\\
&75&\,&78&\,&81&\,&&\,&
\end{align*} 
and taking modulo $14$, it corresponds to 
\begin{align*}
&3&\,&6&\,&9&\,&12&\,&1\\
&4&\,&7&\,&10&\,&13&\,&2\\
&5&\,&8&\,&11&\,&&\,&
\end{align*} 
Therefore, by the formula (\ref{lem-hh-a1-1}) in Theorem \ref{lem-hh-a1}, 
\begin{align*}
&s_\mu^{(-1)}(14,17,20,23,26,29)\\
&=\frac{1}{\mu+1}\sum_{n=0}^\mu\binom{\mu+1}{n}B_n\cdot 14^{n-1}\biggl(
(17^{\mu+1-n}-3^{\mu+1-n})(-1)^{17}\\
&\quad +(20^{\mu+1-n}-6^{\mu+1-n})(-1)^{20}+(23^{\mu+1-n}-9^{\mu+1-n})(-1)^{23}\\
&\quad +(26^{\mu+1-n}-12^{\mu+1-n})(-1)^{26}+(29^{\mu+1-n}-1^{\mu+1-n})(-1)^{29}\\
&\quad +(46^{\mu+1-n}-4^{\mu+1-n})(-1)^{46}+(49^{\mu+1-n}-7^{\mu+1-n})(-1)^{49}\\
&\quad +(52^{\mu+1-n}-10^{\mu+1-n})(-1)^{52} +(55^{\mu+1-n}-13^{\mu+1-n})(-1)^{55}\\
&\quad +(58^{\mu+1-n}-2^{\mu+1-n})(-1)^{58}+(75^{\mu+1-n}-5^{\mu+1-n})(-1)^{75}\\
&\quad +(78^{\mu+1-n}-8^{\mu+1-n})(-1)^{78}+(81^{\mu+1-n}-11^{\mu+1-n})(-1)^{81}
\biggr)\,. 
\end{align*}
Then, we find that 
\begin{align}  
s_1^{(-1)}(14,17,20,23,26,29)&=-116\,,\notag\\
s_2^{(-1)}(14,17,20,23,26,29)&=-6380\,,\notag\\
s_3^{(-1)}(14,17,20,23,26,29)&=-375500\,,\notag\\
s_4^{(-1)}(14,17,20,23,26,29)&=-22771652\,,\notag\\
s_5^{(-1)}(14,17,20,23,26,29)&=-1406886596\,.
\label{eq:14-1}
\end{align}
In fact, the alternate sum is given by 
\begin{align*}  
&s_1^{(-1)}(14,17,20,23,26,29)\\
&=-1 + 2 - 3 + 4 - 5 + 6 - 7 + 8 - 9 + 10 - 11 + 12 - 13 - 15 + 16 + 18\\
&\quad - 19 - 21 + 22 + 24 - 25 - 27 + 30 
+ 32 - 33 - 35 + 36 + 38 - 39 - 41\\ 
&\quad + 44 - 47 + 50 - 53 - 61 + 64 - 67\\
&=-116\,.
\end{align*}

On the other hand, by applying Theorem \ref{th:wpw-arith-a1} as $a=14$, $d=3$, $k=6$, $q=2$ and $r=3$, for $\lambda=-1$, we can directly get the values in (\ref{eq:14-1}).

\section*{Acknowledgement}  

The author thanks the anonymous referee for all valuable comments and suggestions.


\begin{thebibliography}{99}

\bibitem{bgk01}   
M. Beck, I. M. Gessel and T. Komatsu, {\em    
The polynomial part of a restricted partition function related to the Frobenius problem}, 
Electron. J. Combin. {\bf 8} (No.1) (2001), \#N7.

\bibitem{br42}  
A. Brauer, {\em 
On a problem of partitions}, 
Amer. J. Math. {\bf 64} (1942), 299--312.

\bibitem{bs62} 
A. Brauer and B. M. Shockley, {\em  
On a problem of Frobenius}, J. Reine. Angew. Math. {\bf 211} (1962), 215--220.  

\bibitem{bs93} 
T. C. Brown and P. J. Shiue,  {\em  
A remark related to the Frobenius problem},  
Fibonacci Quart. {\bf 31} (1993), 32--36.  

\bibitem{com74}
L. Comtet, {\em  
Advanced Combinatorics}, Rev. and enlarged ed., 
Dordrecht, D. Reidel Publishing Company, (1974).  

\bibitem{cu90} 
F. Curtis, {\em 
On formulas for the Frobenius number of a numerical semigroup}, 
Math. Scand. {\bf 67} (1990), 190--192. 

%\bibitem{dm64} 
%A. L. Dulmage and N. S. Mendelsohn, {\em 
%Gaps in the exponent set of primitive matrices}, 
%Illinois J. Math. {\bf 8} (1964), 642--656. 

\bibitem{fks}  
L. G. Fel, T. Komatsu and A. I. Suriajaya, {\em 
A sum of negative degrees of the gaps values in $2$ and $3$-generated numerical semigroup},
Ann. Math. Inform. {\bf 52} (2020), 85--95.  
DOI: 10.33039/ami.2020.08.001  

\bibitem{fr07} 
L. G. Fel and B. Y. Rubinstein, {\em 
Power sums related to semigroups $S(d_1,d_2,d_3)$}, 
Semigroup Forum {\bf 74} (2007), 93--98. 

\bibitem{gkp89}  
R. L. Graham, D. E. Knuth and O. Patashnik,  {\em  
Concrete Mathematics},  
Addison-Wesley, Reading MA, 1988. 

\bibitem{gr73}
D. D. Grant, {\em  
On linear forms whose coefficients are in arithmetic progression},  
Israel J. Math. {\bf 15} (1973), 204--209.  

\bibitem{jo60}  
S. M. Johnson, {\em  
A Linear Diophantine problem},  
Canad. J. Math. {\bf 12} (1960), 390--398.  

\bibitem{ko03}  
T. Komatsu, {\em  
On the number of solutions of the Diophantine equation of Frobenius--General case}, 
Math. Commun. {\bf 8} (2003), 195--206. 

\bibitem{KZ0}  
T. Komatsu and Y. Zhang, {\em 
Weighted Sylvester sums on the Frobenius set}, 
Irish Math. Soc. Bull. {\bf 87} (2021), 21--29.   

\bibitem{KZ}  
T. Komatsu and Y. Zhang, {\em 
Weighted Sylvester sums on the Frobenius set in more variables}, 
Kyushu J. Math. {\bf 76}, no.1 (2022), 163--175.  

\bibitem{op08}  
D. C. Ong and V. Ponomarenko, {\em 
The Frobenius number of geometric sequences}, 
Integers {\bf 8} (2008), Article A33, 3 p.

\bibitem{pu18}  
P. Punyani and A. Tripathi, {\em  
On changes in the Frobenius and Sylvester numbers},  
Integers {\bf 18B} (2018), \#A8, 12 p. 

\bibitem{ra05} 
J. L. Ram\`irez Alfons\`in,  {\em 
The Diophantine Frobenius Problem},  
Oxford University Press, Oxford, 2005.  

\bibitem{ro56} 
J. B. Roberts, {\em 
Notes on linear forms},  
Proc. Amer. Math. Soc. {\bf 7} (1956), 465--469. 

\bibitem{ro94} 
\O. J. R\o dseth, {\em 
A note on Brown and Shiue's paper on a remark related to the Frobenius problem},
Fibonacci Quart. {\bf 32} (1994), 407--408. 

\bibitem{se77}  
E. S. Selmer, {\em  
On the linear diophantine problem of Frobenius},  
J. Reine Angew. Math. {\bf 293/294} (1977), 1--17.  

%\bibitem{si74} 
%E. Siering, {\em 
%\"Uber lineare Formen und ein Problem von Frobenius. I}, 
%J. Reine Angew. Math. {\bf 271} (1974), 177--202. 

%\bibitem{oeis}  
%N. J. A. Sloane, {\em  
%The On-Line Encyclopedia of Integer Sequences}, 
%available at oeis.org.  (2021).  

\bibitem{sy1882}  
J. J. Sylvester, {\em 
On subinvariants, i.e. semi-invariants to binary quantics of an unlimited order}, 
Am. J. Math. {\bf 5} (1882), 119--136.

\bibitem{sy1884}  
J. J. Sylvester,  {\em  
Mathematical questions with their solutions},  
Educational Times {\bf 41} (1884), 21.  

%\bibitem{tat05} 
%J. J. Tattersall, {\em 
%Elementary number theory in nine chapters}, 
%Second ed. Cambridge Univ. Press, Cambridge, 2005. xii+430 pp. ISBN: 978-0-521-61524-2; 0-521-61524-0 

\bibitem{tr00}  
A. Tripathi, {\em 
The number of solutions to $a x+b y=n$}, 
Fibonacci Quart. {\bf 38} (2000), 290--293.

\bibitem{tr08}  
A. Tripathi, {\em 
On sums of positive integers that are not of the form $a x+b y$}, 
Amer. Math. Monthly {\bf 115} (2008), 363--364.  

\bibitem{tr17} 
A. Tripathi, {\em 
Formulae for the Frobenius number in three variables}, 
J. Number Theory {\bf 170} (2017), 368--389.

\bibitem{tr17b} 
A. Tripathi, {\em 
On a special case of the Frobenius problem}, 
J. Integer Seq. {\bf 20} (2017), Article 17.7.2, 12 pp. 

\bibitem{tu06}  
H. J. H. Tuenter, {\em 
The Frobenius problem, sums of powers of integers, and recurrences for the Bernoulli numbers}, 
J. Number Theory {\bf 117} (2006), 376--386.  

\bibitem{wang08} 
W. Wang and T. Wang, {\em 
Alternate Sylvester sums on the Frobenius set}, 
Comput. Math. Appl. {\bf 56} (2008), 1328--1334. 

\end{thebibliography}
\end{document}